\renewcommand{\orcid}[1]{\href{https://orcid.org/#1}{\textcolor[HTML]{A6CE39}{orcid.org/#1}}}
\setlist[enumerate]{leftmargin=.5in}
\setlist[itemize]{leftmargin=.5in}
\crefname{hypothesis}{Hypothesis}{Hypotheses}
\title{Comparison to control oscillations in  high-order Finite Volume schemes via  physical constraint limiters, neural networks and polynomial annihilation\thanks{
\monthyeardate\today 
\corresponding{Philipp \"Offfner}\funding{This work was partially supported by the German Science Foundation (DFG) under Grant SO 363/15-1 (Hillebrand), Grant SO 363/14-1 (Klein) and the Gutenberg Research College, JGU Mainz (\"Offner). }
}}
\author{
Dorian Hillebrand\thanks{Institute of Mathematics, Technical University Brunswick, Brunswick, Germany,\email{d.hillebrand@tu-braunschweig.de}, 
}
\and 
Simon-Christian Klein\thanks{Institute of Mathematics, Technical University Brunswick, Brunswick, Germany, (\email{simon-christian.klein@tu-braunschweig.de}, \orcid{0000-0002-8710-9089})}
\and 
Philipp \"Offner\thanks{Institute of Mathematics, Johannes Gutenberg University, Mainz, Germany, (\email{poeffner@uni-mainz.de}, \orcid{0000-0002-1367-1917})} 
}
\DeclareMathOperator*{\argmin}{arg\,min} 
\newcommand{\scp}[2]{\left\langle{#1, #2}\right\rangle}
\newcommand{\vd}{\mathrm{d}}
\newcommand{\R}{\mathbb{R}} 
\renewcommand{\epsilon}{\varepsilon}
\renewcommand{\div}{\operatorname{div}}
\newcommand{\bu}{\mathbf{u}}
\newcommand{\fnum}{f^{\mathrm{num}}}
\newcommand{\fprec}{f^{n,\mathrm{precise}}}
\newcommand{\fnn}{f^{n, \mathrm{num}}}
\newcommand{\fnnnn}{f^{n, \mathrm{neural}}}
\newcommand{\Fnum}{F^{\mathrm{num}}}
\newcommand{\Fnn}{F^n}
\newcommand{\uinit}{\mathcal{I}}
\newcommand{\nquad}{\mathrm{I}}
\newcommand{\of}[1]{\left (#1 \right)}
\newcommand{\derive}[2] {\frac{\partial {#1} }{\partial {#2}}}
\newcommand{\derd}[2]{\frac{\vd {#1}}{\vd {#2}}}
\DeclareMathOperator{\ch}{conv}
\DeclareMathOperator{\PR}{P}
\DeclareMathOperator{\ran}{ran}
\begin{document}

\maketitle




\begin{abstract}
The construction of high-order structure-preserving numerical schemes to solve hyperbolic conservation laws has attracted a lot of attention in the last decades and various different ansatzes exist. 
In this paper, we compare  three completely different approaches, i.e. physical constraint limiting, deep neural networks and the application of polynomial annihilation to 
construct high-order oscillation free Finite Volume (FV) blending schemes. We further analyze their analytical and numerical properties. 
We demonstrate that all techniques can be used and yield highly efficient FV methods but also come with some additional drawbacks which we point out. 
Our investigation of the different blending strategies should lead to a better understanding of those techniques and can be transferred
 to other numerical methods as well which use similar ideas.

\end{abstract}

\begin{keywords}
	Hyperbolic Conservation Laws; Entropy Dissipation; Finite Volume; Machine Learning; Polynomial Annihilation; Limiters 
\end{keywords}

%

\section{Introduction} 
\label{sec:introduction} 

Hyperbolic conservation laws play a fundamental role within mathematical models for various physical processes, including fluid mechanics, electromagnetism and wave phenomena. However, since especially nonlinear conservation laws cannot be solved analytically, numerical methods have to be applied.
Starting already in 1950 with first-order finite difference methods (FD), the development has dramatically increased over the last decades including finite volume (FV) and finite element (FE) ansatzes \cite{richtmyer1994difference, du2016handbook, abgrall2017handbook}. To use modern computer power efficiently, high-order methods are nowadays constructed  which are used to obtain accurate solutions in a fast way. However, the drawback of high-order methods is that they suffer from stability issues, in particular after the developments of discontinuities which is a natural feature of hyperbolic conservation laws/balance laws. 
Here, first-order methods are favorable since their natural amount of high dissipation results in robust methods. In addition, many first-order methods have also the property that they preserve  other physical constraints like the positivity of density or pressure in the context of the Euler equations of gas dynamics.
In contrast, high-order approaches need additional techniques like positivity preserving limiters, etc. \cite{zhang2011maximum}.
Due to those reasons, researchers have combined low-order methods with high-order approaches to obtain schemes with favorable properties. The high-order accuracy of the method in smooth regions is kept, while also the excellent  stability conditions  and the  preservation of physical constraints   of the low order methods  near the discontinuities remain.  
Techniques in such context are e.g. Multi-dimensional Optimal Order Detection (MOOD) \cite{bacigaluppi2019posteriori, clain2011high}, subcell FV methods \cite{sonntag2014shock, hennemann2021provably} or limiting \cite{guermond2019invariant,kuzmin2020monolithic, kuzmin2021Limiter} strategies to name some.
In the last two approaches, free parameters are selected/determined which mark the problematic cells where the discontinuity may live. Here, the low order method is used whereas in the unmarked cells the high-order scheme still remains. 
To select those parameters, one uses either  shock sensors \cite{persson2006sub, offner2015zweidimensionale} or constraints on physical quantities  (entropy inequality, the positivity of density and pressure, etc.). 
As an alternative to those classical ansatzes, the application of machine learning (ML) techniques as shock sensors and to control oscillations have recently driven a lot of attention \cite{abgrall2020neural, beck2020neural, discacciati2020controlling, zeifang2021data}.  ML can be used for function approximation, classification and regression  \cite{Cybenko1989}. In this manuscript, we will extend those investigations in various ways. \\
In  \cite{klein2021using}, the author has proposed a simple blending scheme that combines a high-order entropy conservative numerical flux with the low-order Godunov-type flux in a convex combination. The approach is somehow  related to convex limiting. The convex parameter is selected by a
 predictor step automatically to enforce that the underlying method satisfies the Dafermos entropy condition numerically.  We focus on this scheme and extend the investigation from  \cite{klein2021using} in various ways.
 First, we propose a novel selection criteria not only based on Dafermos entropy criteria  \cite{dafermos1973entropy}  but rather on the preservation of other physical constraints, e.g. the positivity of density and pressure. As an alternative ansatz, we  further investigate the application of forward neural networks (NN) to specify the convex  parameter. As the last approach, we apply polynomial annihilation (PA) operators described in \cite{glaubitz2019high}. 
Our investigation of the different limiting strategies should lead to a better understanding of those techniques and can be transferred to 
alternative approaches based on similar ideas. Finally, all of our extensions will lead to highly efficient numerical methods for solving hyperbolic conservations laws.  The rest of the paper is organized as follows:\\
In  \cref{se_numerical_method}, we present the one-dimensional blending scheme from 
\cite{klein2021using}, introduce the notation and repeat its basic properties.  We further demonstrate that also a fully discrete entropy inequality will be satisfied locally under additional constraints on the blending parameter. In \cref{se_phys_cons}, we further specify the parameter selection not only taking the entropy condition into account but also other physical constraints. 
Here, we concentrate on the Euler equation of gas dynamics and demand the positivity of pressure and density as well. In \cref{se_neunet}, we repeat forward NN and how we apply them to determine the convex parameter in the extended blending scheme to obtain an oscillation free numerical scheme. In \cref{se_regul}, the polynomial annihilation  operators are finally 
explained and how they are  used in our framework to select the blending parameter.  In \cref{se_numerics}, we test all presented methods and limiting strategies and compare the results with each other. We discuss the advantages and disadvantages of all presented methods and give finally a summary with a conclusion.

\section{Numerical Method for Hyperbolic Conservation Laws}\label{se_numerical_method}

\subsection{Notation}
We are interested in solving  hyperbolic conservation laws
\begin{equation} 
\partial_t \bu(x, t) + \partial_x  f(\bu (x, t)) = 0,\quad x \in \Omega \subset \R, t>0,  \\
\label{eq:hpde}
\end{equation}
where  $\bu: \R \times \R \to \R^m$ is the conserved variables and $f$ is the flux function. In this manuscript, we restrict ourself to the one-dimensional setting for simplicity.  In case of a scalar equation, we use additional $u$ instead of $\bu$.
Equation \eqref{eq:hpde} will be later equipped with suitable boundary and initial conditions.
 Since hyperbolic conservation laws may develop discontinuities even for smooth initial data, weak solutions are considered but they are not necessarily unique.
Motivated from physics, one selects the  solution which fulfills the additional 
entropy inequality 
\begin{equation}
\partial_t  U( \bu) + \div F( \bu) \leq 0
\label{eq:eie}
\end{equation}
with convex entropy $U$ and entropy flux $F$. We are working in the framework of FD/FV methods, therefore different kinds of numerical fluxes  are used  in the paper. We denote a general numerical flux of  $f$ with $\fnum$. It has two or more arguments in the following, i.e. 
 $\fnum (\bu_{k-p+1}, \dots, \bu_{k+p})$. 
If we apply an entropy stable flux, e.g. the Godunov flux, we denote this numerical flux by  $g: \R^m \times \R^m \to \R^m$. Using $g$ in a classical  FV/FD methodology results in a low (first) order method. Contrary,  $h: \R^m \times \R^m \to \R^m$ denotes an entropy conservative and 
high-order accurate numerical flux, cf. \cite{tadmor1987numerical, lefloch2002fully}. Please be aware that $g$ and $h$ even without the superscript $\operatorname{num}$ denote always in this paper numerical fluxes. 
The entropy-entropy flux pairs $(U, F)$ are designated using uppercase letters and the notation of  numerical entropy fluxes  are the same as above. It is that the numerical entropy flux $G: \R^p \times \R^p \to \R^p$ is associated with a dissipative numerical flux $g$, analog for $h$. We use further the standard abbreviation, i.e. $g(\bu(x_k, t), \bu(x_{k+1}, t)) = g(\bu_{k}(t), u_{k+1}(t)) = g_{k+\frac 1 2}(t)
$
 generalizing $x_k$ as a way of referring to the center of cell $k$ and $x_{k+\frac 1 2}$  to the right cell boundary, cf. \cref{fig:celldivision}.
 \begin{figure}
			\begin{center}
				\begin{tikzpicture}
					\draw (-5, 0) -- (5, 0); 
					\draw (-5, 0) node [left] {$t^n$};
					\draw (-1, 0) -- (-1, 2); 
					\draw (-1, 2) node [above]{ $\fnum_{k - \frac 1 2}$};
					\draw (-1, 0) node [below] {$x_{k - \frac 1 2}$};
					\draw (1, 0) -- (1, 2); 
					\draw (1, 2) node [above]{ $ \fnum_{k + \frac 1 2}$};
					\draw (1, 0) node [below] {$x_{k + \frac 1 2}$};
					\draw [dotted](0, 0) -- (0, 2); 
					\draw (0, 2) node [above] {$f_k$};
					\draw (0, 0) node[below] {$x_k$};
					\draw (-3, 0) -- (-3, 2); 
					\draw (-2, 0) node [below] {$x_{k-1}$};
					\draw (3, 0) -- (3, 2);
					\draw (2, 0) node [below] {$x_{k+1}$};
					\draw (-2, 1) node {$u_{k-1}$};
					\draw (2, 1) node {$u_{k+1}$};
					\draw (0, 4/3) node {$u_k^n$};
					\draw (0, 2/3) node {$u_{k-\frac 1 4}\, u_{k+ \frac 1 4}$};
					\draw (-5, 2) -- (5, 2); 
					\draw (-5, 2) node [left] {$t^{n+1}$};
				\end{tikzpicture}
				\caption{The subdivision of a cell in space, initialized with the mean value of the old cell.}
				\label{fig:celldivision}
				\end{center}
			\end{figure}
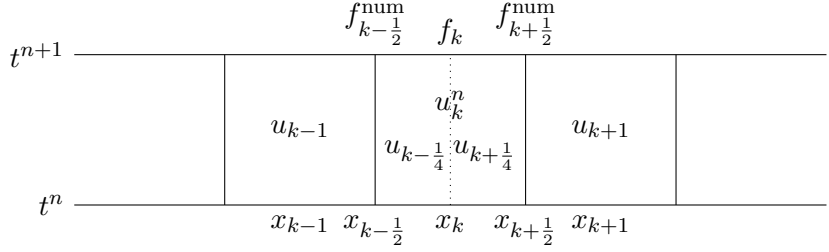
 The same procedure is used for grid points in time in the fully discrete setting, i.e. 
$
  g(\bu(x_k, t_n), \bu(x_{k+1}, t_n)) = g(\bu^n_{k}, \bu^n_{k+1}) = g_{k+\frac 1 2}^n.
$
 Please note that a $2p$ point numerical flux at position $k+ \frac 1 2$ used the points $\bu_{k-p+1}, \dots, \bu_{k+p}$, e.g. for $p=2$ we have 
$
  h(\bu^n_{k-1}, \bu^n_{k}, \bu^n_{k+1}, \bu^n_{k+2}) = h^n_{k+\frac 1 2}.
$
Convex combined numerical fluxes are written as 
 \[
 f_{\alpha_{k+\frac 1 2}}^n = \alpha_{k + \frac 1 2} g^n_{k+\frac 1 2} + \left(1-\alpha_{k+\frac 1 2}\right) h^n_{k+\frac 1 2}.
 \]
Working with reconstruction free FV methods, the numerical solution in the cell is constant in space at a certain time, in short form i.e. 
$
	f^n_k = f(u^n_k) = f(u(x_k, t_n))
$
for instance. Sometimes cells are cutted in half at position $x_k$ as described in \cref{fig:celldivision}. Therefore  it exists the cell interfaces at $x_{k-1}, x_{k-\frac 1 2}, x_k, x_{k + \frac 1 2}$ and $x_{k+1}$. The middlepoints are  $x_{k-\frac 3 4}, x_{k-\frac 1 4}, x_{k+\frac 1 4}, x_{k+\frac 3 4}$.
We use an uniform mesh with cell length $\Delta x=x_{k+ \frac 12}-x_{k - \frac 12}$ and uniform time-steps $\Delta t= t^{n+1}-t^n$. The mesh ratio is defined by $\lambda= \frac{\Delta t} {\Delta x}$.\\
As mentioned above, to select the physical meaningful solution \eqref{eq:eie} has to be fulfilled.  In terms of our numerical approximation,
the determined solution has been constructed to imitate \eqref{eq:eie} discretely, i.e. in context of first-order FV/FD this means
\[
    \frac{U^{n+1}_k - U^{n}_k}{\Delta t} + \frac{G^n_{k+ \frac 1 2} - G^n_{k-\frac 1 2}}{\Delta x} \leq 0
\]
for an entropy stable numerical entropy flux $G$. 
If the approximated solution satisfies for all entropy pairs such inequality, we call the scheme entropy stable and entropy dissipative if it is only fulfilled for one specific entropy pair. In the last years, many researchers have worked on the construction of entropy conservative and dissipative schemes based either on FD, FV or FE ansatzes, cf. \cite{abgrall2018general, abgrall2021analysis, abgrall2022reinterpretation, chan2018discretely, chen2020review, fisher2013discretely, offner2018stability, ranocha2016summation}. Here, the entropy condition was fulfilled locally.

\subsection{FV Method with Predictor-Corrector Fluxes}
To explain our blending scheme, we start from the classical FV method.  
A FV method results from integrating the conservation law over a rectangle $\left[x_{k-\frac 1 2}, x_{k + \frac 1 2}\right] \times \left[t^n, t^{n+1}\right]$
\begin{equation} \label{eq:discFVM}
\begin{aligned}
	 u^{n+1}_k =& \int_{x_{k-\frac 1 2}}^{x_{k+\frac 1 2}} \frac{u(x, t^{n+1})}{\Delta x} \vd x \\=& \int_{x_{k+\frac 1 2}}^{x_{k+\frac 1 2}} \frac{u(x, t^n)}{\Delta x} \vd x &+& \frac{1}{\Delta x}\int_{t^n}^{t^{n+1}} f\left(u\left(x_{k-\frac 1 2}, \tau\right)\right) - f\left(u\left(x_{k+\frac 1 2}, \tau \right)\right)  \vd \tau \\
	  \approx&  u^{n}_k &+& \frac{\Delta t}{\Delta x} \left(\fnn_{k-\frac 1 2} - {\fnn_{k+\frac 1 2}}\right).
	 \end{aligned}
\end{equation}
Taking the limit $\lim_{\Delta t \to 0} \frac{1}{\Delta t}$ in \eqref{eq:discFVM} results in a system of ordinary differential equations (ODEs) which can be solved 
using e.g. Runge-Kutta (RK) schemes \cite{SO1988, SO1989}.  Here, one split between the space and time discretization also referred to as the method of lines ansatz (MOL). If only the PDE is discretized in space, we call the scheme in semi-discrete form.  
 A different approach is based on the assumption that a numerical flux for timesteps $\Delta t=t^{n+1}-t^n$ could be devised based on knowledge of the conservation law and the local time evolution of the solution. 
Based on this line of thought is the Cauchy Kowaleskaya expansion used in \cite{ENOIII} to provide a high-order time-stepping method.  
The drawback of the Cauchy Kowaleskaya approach is that it typically results in lengtly calculations, complex implementations and/or implicit methods where nonlinear solvers are needed. However, we distinguish between the semidiscrete and the fully discrete schemes in the following. 
Obviously, in  \eqref{eq:discFVM} the coupling between neighboring cells has been done via numerical fluxes $\fnn_{k-\frac 1 2}$ to ensure the conservation property. 
A vast amount of numerical fluxes is known in the literature \cite{Lax71, Roe1981, HLL1983, IsmailRoe2009} and even selecting a flux is a nontrivial task \cite{ranocha2018comparison}. Some fluxes, like the Godunov, Lax-Friedrichs, Roe and HLL fluxes, that can be interpreted by exact or approximate Riemann problem solutions, are meant to approximate the flux through some cell boundary over time $\Delta t$, i.e. being the mean value of the flux over this period.  
Numerical fluxes that have only a semidiscrete interpretation need some sort of high order time integration method. 
Our method for high-order time integration is based on a reinterpretation of predictor-corrector time integration \cite[p. 386]{Isaacson1966Analyis} as a numerical quadrature of the numerical flux over a cell boundary.

\begin{theorem}{(Predictor-Corrector-Fluxes)}\label{thm:rkflux}
	Let $\fnum(u_{k}, u_{k+1})$ be a numerical flux and  $u_k(t)$ on $[t, t + \Delta t]$ be the exact solution of the scheme 	\[
	\derd{u_k(t)}{t} + \frac{\fnum(u_{k}(t), u_{k+1}(t)) - \fnum(u_{k-1}(t), u_{k}(t))}{\Delta x} = 0
	\]
	with uniform cell size $\Delta x$.  
	Then, the 4-point numerical flux $\fnum(u_{k-1}, u_{k}, u_{k+1}, u_{k+2})$ defined as
	\[
	\begin{aligned}
	u^1_k =& u_k + \lambda (\fnum(u_{k-1}, u_k) - \fnum(u_{k}, u_{k+1})), \\
	u^1_{k+1} =& u_{k+1} + \lambda (\fnum(u_k, u_{k+1}) - \fnum(u_{k+1}, u_{k+1})), \\
	\fnum(u_{k-1}, u_{k}, u_{k+1}, u_{k+2}) =& \frac{\fnum(u_k, u_{k+1}) + \fnum\of{u^1_{k}, u^1_{k+1}}}{2}
	\end{aligned}
	\]
	is a second-order\footnote{Please note that the term $p$ order accurate was coined so that integration via a $p$ order quadrature rule leads to a $p$ order accurate approximation.} accurate approximation of
$
		\frac{1}{\Delta t}\int_{t}^{t + \Delta t} \fnum(u_{k}(\tau), u_{k+1}(\tau)) \vd \tau,
$
	i.e.
	\[
		\norm{\fnum(u_{k-1}, u_{k}, u_{k+1}, u_{k+2}) - \frac{1}{\Delta t}\int_{t}^{t + \Delta t} \fnum(u_{k}(\tau), u_{k+1}(\tau)) \vd \tau} = \mathcal O{(\Delta t)^2}.
	\]
	\begin{proof}
		We begin by stating that the intermediate values $u^1_{k}, u^1_{k+1}$ are first-order accurate, i.e. 
		\[
		u^1_{k} = u_k(t + \Delta t) + \mathcal O\of{(\Delta t)^2} \quad u^1_{k+1} = u_{k+1}(t + \Delta t) + \mathcal O\of{(\Delta t)^2}
		\]
		due to the explicit Euler method. Calculation of the flux between cell $u_k$ and $u_{k+1}$ over time $\Delta t$ via the trapezoid rule $\nquad$ (second-order) and the exact solution $u_k(t)$ is second-order accurate, i.e.
		\[
		\begin{aligned}
		\nquad[\fnum(u_{k}(\cdot), u_{k+1}(\cdot))] = &\frac{\Delta t}{2}(\fnum(u_k(t), u_{k+1}(t)) + \fnum(u_{k}(t + \Delta t), u_{k+1}(t+ \Delta t))) \\ = &\int_t^{t + \Delta t} \fnum(u_{k}(\tau), u_{k+1}(\tau))\vd \tau + \mathcal O\left((\Delta t)^3\right).
		\end{aligned}
		\]
		Due to the Lipschitz continuity of $\fnum$, we have 
		\[
		\norm{\fnum(u_l(t + \Delta t), u_r(t + \Delta t)) - \fnum(u^1_l, u^1_r)}\leq L_f \left(\norm {u_l(t + \Delta t) - u^1_l} + \norm{u_r(t + \Delta t) - u^1_r}\right),
		\]
		where $u_l$ and $u_r$ denote the left and right value at some generic interface. 
	Due to the accuracy order of $u^1_k$ and $u^1_{k+1}$, it follows
		\[
			\norm{\fnum(u_k(t + \Delta t), u_{k+1}(t + \Delta t)) - \fnum(u^1_k, u^1_{k+1})} = \mathcal O\left(\Delta t^2 \right).
		\]
		The combination of these three statements yields that the numerical quadrature of the flux calculated using the approximate values $u^1_k, u^1_{k+1}$
		\[
		\begin{aligned}
		\Delta t \fnum 	=& \frac{\Delta t }{2}(\fnum(u_{k}, u_{k+1}) + \fnum(u^1_k, u^1_{k+1})) \\
						=& \frac{\Delta t }{2}(\fnum(u_k, u_{k+1}) + \fnum(u_k(t + \Delta t), u_{k+1}(t + \Delta t)) + \mathcal O(\Delta t)^2) \\
						=& \nquad[\fnum(u_k(\cdot), u_{k+1}(\cdot))] + \mathcal O(\Delta t)^3 \\
						=& \int_t^{t + \Delta t} \fnum(u_k(\tau), u_{k+1}(\tau))\vd \tau + \mathcal O\left(\Delta t^3\right) 
		\end{aligned}
		\]
		is a second-order exact approximation and dividing by $\Delta t$ induces the result.
		\end{proof}
\end{theorem}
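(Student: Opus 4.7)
The plan is to mirror the three-step structure that the predictor--corrector heuristic naturally suggests: (i) control the predictor error, (ii) control the quadrature error for the exact ODE solution, (iii) combine via Lipschitz continuity of $\fnum$.

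First, I would observe that the update rule for $u^1_k$ is exactly one explicit Euler step of length $\Delta t$ applied to the semi-discrete ODE whose exact solution is $u_k(t)$. Standard ODE theory then gives
\[
u^1_k = u_k(t+\Delta t) + \mathcal O\of{(\Delta t)^2}, \qquad u^1_{k+1} = u_{k+1}(t+\Delta t) + \mathcal O\of{(\Delta t)^2},
\]
provided the flux (and hence the right-hand side of the ODE) is smooth enough for the Euler local truncation error estimate to hold. This is the easy but essential step: it tells us that $u^1_k, u^1_{k+1}$ differ from the true endpoint values by only $\mathcal O((\Delta t)^2)$.

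Second, I would handle the exact integral $\int_t^{t+\Delta t}\fnum(u_k(\tau),u_{k+1}(\tau))\,\vd\tau$ by invoking the trapezoidal quadrature rule $\nquad$. Since $\tau\mapsto\fnum(u_k(\tau),u_{k+1}(\tau))$ inherits smoothness from the ODE solution, the standard trapezoidal error bound gives
\[
\nquad[\fnum(u_k(\cdot),u_{k+1}(\cdot))] = \int_t^{t+\Delta t}\fnum(u_k(\tau),u_{k+1}(\tau))\,\vd\tau + \mathcal O\of{(\Delta t)^3},
\]
i.e. the quadrature error is $\mathcal O((\Delta t)^3)$, matching the desired global $\mathcal O((\Delta t)^2)$ after division by $\Delta t$.

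Third, I would bridge the gap between the trapezoidal value (which uses the true endpoint $\fnum(u_k(t+\Delta t),u_{k+1}(t+\Delta t))$) and the defined corrector (which uses $\fnum(u^1_k,u^1_{k+1})$) via Lipschitz continuity of $\fnum$ with constant $L_f$:
\[
\norm{\fnum(u_k(t+\Delta t),u_{k+1}(t+\Delta t)) - \fnum(u^1_k,u^1_{k+1})} \leq L_f\bigl(\norm{u_k(t+\Delta t)-u^1_k}+\norm{u_{k+1}(t+\Delta t)-u^1_{k+1}}\bigr) = \mathcal O\of{(\Delta t)^2}.
\]
Multiplying by the quadrature weight $\tfrac{\Delta t}{2}$ contributes $\mathcal O((\Delta t)^3)$ to the total integral approximation error, which is of the same order as the quadrature error itself. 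Adding these two contributions gives a total error of $\mathcal O((\Delta t)^3)$ in approximating the time-integrated flux, and dividing by $\Delta t$ yields the claimed $\mathcal O((\Delta t)^2)$ bound.

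The main technical obstacle is really just bookkeeping: one must ensure that $\fnum$ is sufficiently smooth for the Euler truncation estimate and for the trapezoidal remainder, and that the ODE solution $u_k(\cdot)$ stays in a region where Lipschitz continuity of $\fnum$ holds uniformly. None of the steps involve deeper PDE structure; the argument is essentially a predictor--corrector accuracy analysis reinterpreted as a quadrature statement on the flux, which is exactly the reinterpretation the authors advertise.
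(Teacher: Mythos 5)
Your proposal is correct and follows essentially the same route as the paper's own proof: a first-order Euler predictor estimate, a trapezoidal quadrature bound of order $(\Delta t)^3$, and a Lipschitz argument to replace the exact endpoint values by the predicted ones, combined and divided by $\Delta t$. Your explicit remark that the Lipschitz term, once multiplied by the quadrature weight $\tfrac{\Delta t}{2}$, contributes $\mathcal O\of{(\Delta t)^3}$ just spells out the bookkeeping that the paper's displayed computation performs implicitly.
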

The above numerical flux $\fnum(u_{k-1}, u_{k}, u_{k+1}, u_{k+2})$ could be also interpreted as the flux over the given cell boundary if the semidiscrete scheme is used together with the strong stability preserving (SSP) RK(2,2) method which is equivalent to the deferred correction method of order 2 \cite{abgrall2021relaxation}. However, higher-order quadrature rules can also be applied in this context. 
%
To describe now the method, we follow  \cite{klein2021using} where the considered blending FV scheme has been proposed. The method  fulfills  Dafermos' entropy condition   \cite{dafermos1973entropy} as defined like follows: 
\begin{definition}[Dafermos' Criteria]\label{def_Dafermos}
Let $\bu$ be a weak solution of \eqref{eq:hpde} and $U$ an entropy. The total entropy in the domain  $\Omega$ is given by 
\[
E_\bu(t) = \int_{\Omega} U( \bu(x, t)) \vd x.
\]
A Dafermos entropy solution $\bu$ is a weak solution that satisfies 
\begin{equation}\label{eq_Dafermos}
 \forall t > 0: \quad \partial_t E_\bu(t) \leq \partial_t E_{\tilde{\bu}}(t) 
\end{equation}
compared to all other weak solutions $\tilde \bu$ of the conservation law \eqref{eq:hpde}. In essence, the entropy of the selected solution decreases faster than the entropy of all other solutions.
\end{definition}

\begin{definition}\label{def_Blending}
The blending scheme is based on the FV approach in conservative form. Instead of using classical numerical fluxes in 
\eqref{eq:discFVM}, a convex combination between classical Godunov-type flux  and  a high-order entropy conservative flux  is used instead. 
The combined flux, called GT-flux, is given by
	\begin{equation}\label{eq_GT_flux}
     f_{\alpha_{k+\frac 1 2}}^n := \alpha_{k + \frac 1 2} g^n_{k+\frac 1 2} ( {\bu_k,\bu_{k+1}}) + (1-\alpha_{k+\frac 1 2}) h^n_{k+\frac 1 2} (\bu_{k-p+1}, \dots, \bu_{k+p})
     	\end{equation}
	where $\alpha_{k+\frac 1 2} \in [0,1]$ is the convex parameter. 
\end{definition}
\begin{example}
To give a concrete example, using the explicit Euler method for the time,  the scheme is given by 
\begin{equation}\label{eq_Scheme}
\bu^{n+1}_i =\bu^{n}_i + \frac{\Delta t}{\Delta x} \left(  f_{\alpha_{k+\frac 1 2}}^n   - f_{\alpha_{k-\frac 1 2}}^n  \right).
\end{equation}
To obtain higher order in time, RK methods can be used instead, which can be rewritten into the fluxes as described in  \cref{thm:rkflux} yielding a high-order method that can be written with explicit Euler steps. Please be aware, that also different time-integration methods can be combined for the convex combination resulting in highly efficient schemes using  \cref{thm:rkflux}.
\end{example}
The properties of the scheme highly depend on the selected fluxes and  on the convex parameter $\alpha_{k+\frac12}$.
The value of $\alpha_{k+\frac12} = \alpha (\bu_{k-p+1}, \dots, \bu_{k+p})$ itself depends on $\bu_i$ 
which takes the high-order stencil into account.
Before we repeat how $\alpha$ has to be selected to ensure that our scheme fulfills additionally Dafermos' criteria \eqref{eq_Dafermos}, we want to summarize the following  basic properties of the scheme and the numerical fluxes:
\begin{itemize}
\item The GT-flux is consistent and local Lipschitz continuous \cite[Lemma 1]{klein2021using}. 
\item The GT-flux with Tadmor's entropy conservative flux or the high-order modification from \cite{lefloch2002fully} satisfies as well the semidiscrete cell entropy inequality locally for the selected entropy pair used in the construction of the flux for all $\alpha  \in (0, 1]$ \cite[Theorem 1]{klein2021using}.
\item Due to the conservation form of \eqref{eq_Scheme} and the convex combination of the flux, the scheme is locally conservative and the Lax-Wendroff theorem is valued due to the applications of the results from \cite{shi2018local}.
\end{itemize}
As we mentioned before, the parameter $\alpha$ is essential for the properties of the underlying method and we repeat from \cite{klein2021using} the following definition:
\begin{definition}
	We call $\alpha: \R^{2p\times m} \to [0, 1]$ an entropy inequality predictor with a $(2p)$ point stencil if
	\begin{align*}
&\lim_{\Delta x \to 0}	\alpha(u_{k-p+1}, \dots, u_{k +p}) \\ = &\begin{cases} 0 &  \exists x \in [x_k -(p-1)\Delta x, x_k + p\Delta x]: \derive{U}{t} + \derive{F}{x} < 0\\
	1 & \forall x \in [x_k -(p-1)\Delta x, x_k + p\Delta x]:\derive{U}{t} + \derive{F}{x} = 0 \end{cases}
	\end{align*}
	holds for the complete stencil. We will call the entropy inequality predictor slope limited if 
	\[ \abs{\alpha_k - \alpha_{k+1}} < M \quad \text{with} \quad \alpha_k = 	\alpha(u_{k-p+1}, \dots, u_{k+p} )\]
	holds for some $M < 1$ and all $i$.
\end{definition}
In \cite{klein2021using}, a slope entropy inequality predictor was constructed starting from Godunov-type flux and demonstrated  that it is slope limited. The predictor is given by 
	\begin{equation}\label{eq_predictor}
	\alpha^n = H_{sm}\left( \frac{\frac {s^n_k}{s_{ref}} -a}{b}   \right) \circledast \hat{h},
	\end{equation}
where $s^n_k$ is the entropy dissipative rate from the classical Godunov scheme
	\begin{equation*}
		s^n_k(t) = \frac{G\of{u^n_{k+1}, u^n_{k}} - G\of{u^n_k, u^n_{k-1}}} { \Delta x} + \frac{U\of{u_k^{n+1}} - U\of{u^{n}_{k}}} { \Delta t}.
	\end{equation*}
 $s_{ref}$ its  minimum value, $\hat{h}$ the cut hat function ($h(x)=\max(0,\min(1,2x+2,-2x+2)$), $H\in C^2$ the smothstep function  
		\begin{equation*}
			H_{sm}(x) = \begin{cases}
				0 & x  \leq 0 \\
				6x^5 - 15x^4+10x^3 & 0 \leq x  \leq 1\\
				1 & 1 \leq x, \\
			\end{cases}
		\end{equation*}
and $\circledast $ denotes the discrete sup-mollification 
$
		(f \circledast g)|_{[i/n, (i+1) / n]} = \max_{j \in \{0, \dots, n-1\}} f_j g_{i-j}$ for $ i = 0, \dots, n-1
$
and step functions $f,g$. 
\begin{remark}
Instead of working with the classical Godunov flux in \cref{def_Blending}, we  use approximated Riemann solvers. In \cite{klein2021using}, the local Lax-Friedrich flux  (LLF) (Rusanov) has been used and shows promising results. In the numerical section, we apply always the LLF flux to obtain a more efficient method. 
Finally, we like to point out that extension of the method to two dimension is straightforward via a tensor-structure ansatz. All of our now considered results can be transferred. 
\end{remark}

\subsection{Local entropy inequality}\label{subsec_local_entropy}
Here, we want to extend the investigation of  \cite{klein2021using} and demonstrate that the method satisfies locally a fully discrete entropy inequality 
%
under additional restrictions on $\alpha$. We  define the entropy production of the semidiscretly entropy conservative flux scheme
\[
	p^n_k = \frac{H\of{u_{k - p+1}, \dots, u_{k+p}} - H\of{u_{k-p+1}, \dots,  u_{k+p}}}{\Delta x} + \frac{U\of{u^{n+1}_k} - U \of {u^n_k}}{\Delta t}
\] 

and call  \textbf{Condition $F$}  the following:
\begin{definition}
	The parameter $\alpha$ is said to satisfy \textbf{Condition $F$} for cell $k$ if
\[
	\alpha_{k+\frac 1 2} s^n_{k + \frac 1 4} + \left(1-\alpha_{k + \frac 1 2}\right) p^n_{k + \frac 1 4} \leq 0 \text{ and }  \alpha_{k-\frac 1 2} s^n_{k - \frac 1 4} + \left(1-\alpha_{k - \frac 1 2}\right) p^n_{k - \frac 1 4} \leq 0
\]
	holds for the left and right cell interfaces.
\end{definition}
We can prove: 
\begin{lemma}\label{lem:EalpBound}
	\textbf{Condition $F$} is fulfilled for cell $k$ if one of the following conditions is satisfied on each cell interface, i.e. for $k+\frac 1 4$ and $k-\frac 1 4$.
	\begin{enumerate}
		\item It holds $s^n_{k+\frac 1 4} \leq 0$ and $p^n_{k+\frac 1 4} \leq 0$, $\alpha \in [0, 1]$ is arbitrary.
		\item It holds $s^n_{k + \frac 1 4} \leq 0$ and $p^n_{k + \frac 1 4} > 0$ and $\alpha \geq \frac{p^n_{k+\frac 1 4}}{p^n_{k + \frac 1 4} - s^n_{k + \frac 1 4}}$.
	\end{enumerate}
	\begin{proof}
		The first condition is obvious. For the second one the following calculation
		\[
			\alpha s + (1-\alpha) p \leq 0 \iff \alpha(s-p) + p \leq 0 \iff  \alpha \geq \frac{p}{p-s} \geq 0
		\]
		with suppressed indices show the result. Please note that $s-p < 0$ holds by construction.
		\end{proof}
	\end{lemma}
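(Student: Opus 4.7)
The plan is to verify Condition $F$ on each interface separately by treating the convex combination $\alpha s + (1-\alpha)p$ as an affine function of $\alpha \in [0,1]$, where I suppress the interface index $k \pm \tfrac 1 4$ and write $s = s^n_{k\pm 1/4}$, $p = p^n_{k\pm 1/4}$, $\alpha = \alpha_{k\pm 1/2}$. Since Condition $F$ is stated as two inequalities, one per interface, and both have the same algebraic form, it suffices to show that under either hypothesis we have $\alpha s + (1-\alpha)p \leq 0$.

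For the first case, if both $s \leq 0$ and $p \leq 0$, then $\alpha s + (1-\alpha)p$ is a convex combination of two non-positive numbers, hence non-positive for every $\alpha \in [0,1]$. No lower bound on $\alpha$ is needed. This case is essentially trivial and can be dispatched in one line.

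For the second case, the strategy is a direct manipulation of the defining inequality. Rewriting
\[
\alpha s + (1-\alpha) p \leq 0 \iff \alpha(s-p) \leq -p.
\]
Here the sign of $s-p$ is decisive. By hypothesis $s \leq 0 < p$, so $s - p < 0$, and dividing flips the inequality to
\[
\alpha \geq \frac{-p}{s-p} = \frac{p}{p-s}.
\]
Note that the right-hand side lies in $[0,1]$ because $0 < p \leq p - s$ (using $s \leq 0$), so a value of $\alpha$ in $[0,1]$ satisfying the bound indeed exists. This is exactly the threshold stated in the lemma.

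I expect no real obstacle in this proof: it is a two-line check for case (1) and an elementary rearrangement for case (2). The only subtlety worth flagging explicitly is that $s - p < 0$ is what allows the inequality to be solved for $\alpha$ and guarantees that the threshold $p/(p-s)$ is well-defined and $\leq 1$; this is where the hypothesis $s \leq 0$ is used in an essential way (beyond the obvious sign of $s$ in the combination itself).
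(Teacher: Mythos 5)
Your proposal is correct and follows essentially the same route as the paper: case (1) by convexity of the combination of two non-positive numbers, and case (2) by rearranging $\alpha s + (1-\alpha)p \leq 0$ and dividing by $s-p<0$ to obtain $\alpha \geq \frac{p}{p-s}$. Your added remark that the threshold lies in $[0,1]$ is a small bonus beyond the paper's proof, but the argument itself is the same.
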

	The aforementioned result allows to calculate a lower bound on $\alpha$ to enforce  \textbf{Condition $F$} if one can guarantee that $s^n_{k+\frac 1 4} > 0$ never happens. As the Godunov flux or Lax-Friedrichs flux are two entropy stable examples  the case $s^n_{k + \frac 1 4}>0$ never happens if an appropriate CFL condition is minded. The following theorem is based on \cite{Tadmor84II} and \cite{klein2021using}:
\begin{theorem}\label{thm:deie}
	The discrete GT-Scheme satisfies a discrete per cell entropy inequality with the flux
	\[
	\Fnum(u_{k-p+1}, \dots, u_{k+p}) = \alpha_{k+\frac 1 2} G(u_{k-p+1}, \dots, u_{k+p}) + (1-\alpha_{k+\frac 1 2}) H(u_{k-p+1}, \dots, u_{k+p}) 
	\]
	if $\alpha_{k+\frac 1 2}$ fulfills the condition $F$ for both cell boundaries and the CFL restriction is half that of the minimum of either fluxes. 
\end{theorem}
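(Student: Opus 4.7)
The plan is to follow the Tadmor-style half-cell argument used in \cite{klein2021using} but now with two different schemes being blended. I would subdivide the cell $k$ into its two halves $[x_{k-\frac 1 2}, x_k]$ and $[x_k, x_{k+\frac 1 2}]$ as in \cref{fig:celldivision}, each carrying the initial mean value $u_k^n$, and perform one forward Euler step on each of these two half-cells with the GT-flux on the outer boundary and the physical (consistent) flux at the inner interface $x_k$. Because the solution is piecewise constant with the same value on both halves at time $t^n$, consistency of $g$ and $h$ implies that the flux at $x_k$ reduces to $f(u_k^n)$ and likewise $G_k=H_k=F(u_k^n)$. The CFL being halved is precisely what is needed so that the half-cell Euler step remains stable for both the pure Godunov and the pure entropy-conservative scheme individually.

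The next step is to exploit the affine dependence of the half-cell update on the numerical flux. Since the GT-flux is $\alpha g + (1-\alpha) h$, the blended half-cell value satisfies
\[
u_{k+\frac 1 4}^{n+1} = \alpha_{k+\frac 1 2}\, u_{k+\frac 1 4,G}^{n+1} + (1-\alpha_{k+\frac 1 2})\, u_{k+\frac 1 4,H}^{n+1},
\]
where the subscripts $G$ and $H$ denote the two pure-flux updates. Convexity of $U$ gives
\[
U(u_{k+\frac 1 4}^{n+1}) \leq \alpha_{k+\frac 1 2}\, U(u_{k+\frac 1 4,G}^{n+1}) + (1-\alpha_{k+\frac 1 2})\, U(u_{k+\frac 1 4,H}^{n+1}).
\]
Subtracting $U(u_k^n)$, dividing by $\Delta t$, and adding the convex combination $F^{num}_{k+\frac 1 2} = \alpha G_{k+\frac 1 2} + (1-\alpha) H_{k+\frac 1 2}$ on the outer boundary together with $F(u_k^n)$ on the inner boundary, the right-hand side reorganizes exactly into $\alpha_{k+\frac 1 2} s^n_{k+\frac 1 4} + (1-\alpha_{k+\frac 1 2}) p^n_{k+\frac 1 4}$, which is $\leq 0$ by \textbf{Condition $F$}. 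The same argument applies on the left half-cell with index $k-\frac 1 4$.

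Finally, I would recombine the two half-cell values into $u_k^{n+1}$ via the averaging identity
\[
u_k^{n+1} = \tfrac 1 2 \bigl(u_{k-\frac 1 4}^{n+1} + u_{k+\frac 1 4}^{n+1}\bigr),
\]
which holds by conservation since the two half-cell steps on $[x_{k-\frac 1 2}, x_k]$ and $[x_k, x_{k+\frac 1 2}]$ with the matching interior flux $f(u_k^n)$ reproduce the full-cell update with fluxes $F^{num}_{k\pm \frac 1 2}$. Applying convexity of $U$ once more, then summing the two half-cell entropy inequalities (each scaled by $\tfrac 1 2 \cdot \tfrac{2\Delta t}{\Delta x} = \tfrac{\Delta t}{\Delta x}$), the internal $F(u_k^n)$ terms cancel and one obtains
\[
\frac{U(u_k^{n+1}) - U(u_k^n)}{\Delta t} + \frac{F^{num}_{k+\frac 1 2} - F^{num}_{k-\frac 1 2}}{\Delta x} \leq 0,
\]
which is the claimed per-cell discrete entropy inequality.

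The main obstacle I expect is justifying cleanly that the half-cell entropy production of the pure Godunov and pure entropy-conservative schemes really correspond to the quantities $s^n_{k\pm\frac 1 4}$ and $p^n_{k\pm\frac 1 4}$ appearing in \textbf{Condition $F$}; this relies on the consistency properties $g(u,u)=h(u,\dots,u)=f(u)$ and $G(u,u)=H(u,\dots,u)=F(u)$ at constant states, and on the halved CFL to guarantee that the pure Godunov half-cell step is itself entropy dissipative (so that $s^n_{k\pm \frac 1 4} \leq 0$ as used implicitly in \cref{lem:EalpBound}). Once these identifications are made, the rest is essentially bookkeeping of convex combinations and cell-averaging.
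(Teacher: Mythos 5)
Your proposal is correct and follows essentially the same route as the paper: the Tadmor-style half-cell splitting with the consistent flux $f(u^n_k)$ (and entropy flux $F^n_k$) at the interior interface, the observation that the GT half-cell update is the convex combination of the pure Godunov-type and pure entropy-conservative half-cell updates, two applications of the convexity of $U$, identification of the resulting terms with $s^n_{k\pm\frac14}$ and $p^n_{k\pm\frac14}$, and the conclusion via \textbf{Condition $F$} with the halved CFL ensuring both half-cell steps are admissible. No substantive differences from the paper's argument.
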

		\begin{proof}
	
	We first state that the cell mean $u^{n+1}_k$ can be written as the average value 
	\[
	\begin{aligned}
	u^{n+1}_k 
	=& u^n_k + \lambda \left (\fnn_{\alpha_{k-\frac 1 2}} - \fnn_{\alpha_{k+\frac 1 2}} \right) \\
	=& \frac{u^n_k + 2 \lambda \left(\fnn_{\alpha_{k-\frac 1 2}} - f\of{u^n_k} \right) + u^n_k + 2 \lambda \left(f\of{u^n_k}-f^n_{\alpha_{k+\frac 1 2}} \right) } {2} 
	\\
	=& \frac{\alpha_{k-\frac 1 2} \left (u^n_k + 2 \lambda \left(g^n_{k-\frac 1 2} - f\of{u^n_k} \right) \right) 
	+ \left(1- \alpha_{k-\frac 1 2} \right)\left(u^n_k + 2 \lambda \left(h^n_{k-\frac 1 2} - f\of{u^n_k} \right)\right) }{2}\\
	&+ \frac{\alpha_{k+\frac 1 2} \left(u^n_k + 2 \lambda \left(f\of{u^n_k}-g^n_{k+\frac 1 2} \right)\right) 
	+ \left(1-\alpha_{k+\frac 1 2}\right)\left(u^n_k + 2 \lambda \left(f\of{u^n_k}-h^n_{k+\frac 1 2} \right) \right) } {2} \\
	=& \frac{u^{n+1}_{k-\frac 1 4} + u ^{n+1}_{k+\frac 1 4}}{2}
	\end{aligned}
	\]
	of two schemes and therefore one concludes that for the entropy of cell $k$ holds 
	\begin{equation}
        \begin{aligned}
		&U(u^{n+1}_k) - U(u^n_k) + \lambda (\Fnn_{\alpha_{k+\frac 1 2}} - \Fnn_{\alpha_{k - \frac 1 2}}) \\
		\leq &
		 \frac {U\of {u^{n+1}_{k-\frac 1 4}} + U \of {u^{n+1}_{k+\frac 1 4}} }{2} - U\of{u^n_k} + \lambda \left( \Fnn_{\alpha_{k+\frac 1 2}} - \Fnn_{\alpha_{k - \frac 1 2}}\right)\\
		= &
		\frac{U\of {u^{n+1}_{k-\frac 1 4}} - U \of{u^n_k} + 2 \lambda \left( F^n_{k} - \Fnn_{\alpha_{k - \frac 1 2}}\right)}{2} 
		+ \frac{U\of {u^{n+1}_{k+\frac 1 4}} - U \of{u^n_k} + 2 \lambda \left( \Fnn_{\alpha_{k+\frac 1 2}} - F^n_{k} \right)}{2} \\
		\leq & \frac{\alpha_{k-\frac 1 2}}{2}\left(U \left(u^n_k + 2 \lambda \left(g^n_{k-\frac 1 2}-f\of{u^n_k} \right) \right) - U(u^n_k) + 2 \lambda  \left( F^n_{k} -G^n_{k-\frac 1 2} \right) \right) \\ 
		&+ \frac{1-\alpha_{k-\frac 1 2}}{2} \left(U\of{u^n_k + 2 \lambda \left(h^n_{k-\frac 1 2}-f\of{u^n_k} \right)}  -U(u^n_k) + 2 \lambda  \left(F^n_{k} - H^n_{k-\frac 1 2} \right) \right) \\
		&+ \frac{\alpha_{k+\frac 1 2}}{2}\left(U \of{u^n_k + 2 \lambda \left(f\of{u^n_k}-g^n_{k+\frac 1 2} \right)} - U(u^n_k) + 2 \lambda   \left(   G^n_{k+\frac 1 2} - F^n_{k}\right) \right) \\ 
		&+ \frac{1-\alpha_{k+\frac 1 2}}{2} \left(U\of{u^n_k + 2 \lambda \left(f\of{u^n_k}-h^n_{k+\frac 1 2} \right)} - U(u^n_k) + 2 \lambda  \left(  H^n_{k+\frac 1 2} - F^n_{k}\right) \right) \\
		=& \frac{\alpha_{k-\frac 1 2}}{2}s^n_{k-\frac 1 4} + \frac{1 - \alpha_{k - \frac 1 2}}{2} p^n_{k-\frac 1 4} + \frac{\alpha_{k+\frac 1 2}}{2}s^n_{k + \frac 1 4} + \frac{1-\alpha_{k+\frac 1 2}}{2} p^n_{k+\frac 1 4} \leq 0
	\end{aligned}
	\end{equation}
	because $U$ is convex.
\end{proof}
If one can enforce \textbf{Condition $F$}, we obtain a fully discrete entropy dissipative scheme by choosing an appropriate $\alpha$. Note that the bound is  sufficient but not necessary.  As we will see in \cref{se_numerics} later, a scheme with $\alpha$ below the given bound can still be entropy dissipative locally  and  give even  better results. This  is  one of the reasons why we will investigate the usage of neuronal networks and polynomial annihilation  for the parameter selection.
But before, we focus on the Euler equation of gas dynamics. There, we can apply the same technique to enforce also the positivity of pressure and density.
Note that the above proof works for any combination of a discrete entropy stable flux in the position of the Godunov-type flux and another entropy conservative flux in the position of the Tadmor flux. It is not needed that these fluxes are two-point fluxes and it is therefore possible to use also the high-order in time fluxes from  \cref{thm:rkflux}. Especially,  the quadrature in \cref{thm:rkflux} can be adjusted to the considered fluxes, i.e. $g$ could be integrated with an SSP time integrator while $h$ could use an arbitrary high-order method for instance.

\section{Positivity of Pressure and Density  for the Euler Equations}\label{se_phys_cons}
We saw that the GT-scheme, if $\alpha$ satisfies \textbf{Condition $F$}, in turn satisfies a discrete per cell entropy inequality. The same mechanism will be used in the following paragraph to enforce positive pressure and/or density for numerical solutions of the Euler equations of gas dynamics \cite{Harten83b}:
\begin{equation}
	\label{eq:Euler}
	u =(\rho, \rho v, E) 
\quad f(\rho, \rho v, E) = \begin{bmatrix} \rho v \\ \rho v^2 + p\\ v(E + p) \end{bmatrix} 
\quad p = (\gamma - 1)\left (E - \frac 1 2 \rho v^2 \right).
\end{equation}
	This system is equipped with the following entropy-entropy flux pair 
\begin{equation} \label{eq:EulerPE}
U(\rho, \rho v, E) = - \rho S  \quad F(\rho, \rho v, E) = - \rho v S \quad S = \ln(p \rho^{- \gamma}).
\end{equation}
One easily validates that while the density is one of the conserved variables and hence a linear functional of the conserved variables the pressure is a concave functional. 
But as we are interested in preserving the positivity one can equally enforce the negativity of the negative pressure and negative density, making the task equivalent to enforcing upper bounds on convex functionals. It is known from literature, cf.  \cite{zhang2011maximum} and references therein, that  both  Godunov and Lax-Friedrichs schemes are positivity preserving under a CFL condition. In the following $g$ will stand for the flux of a positivity preserving dissipative scheme and our convex functionals that should be enforced are denoted by $c_1(u) = -p(u)$ and $c_2 = -\rho(u)$. The counterparts of \textbf{Condition $F$} are \textbf{Condition $\rho$} and \textbf{Condition $P$}.
\begin{definition}[\textbf{Condition $\rho$}]
	The parameter $\alpha$ is said to satisfy \textbf{Condition $\rho$} for cell $k$ if 
	\[
		\begin{aligned}
	c_2\of{u^n_k + 2\lambda \left(	\fnn_{\alpha_{k-\frac 1 2}} + f(u^n_k)\right) } \leq 0 \quad
	 c_2\of{u^n_k + 2\lambda \left( f^n_{k} - \fnn_{\alpha_{k+\frac 1 2}}\right) } \leq 0
		\end{aligned}
	\]
	holds.
\end{definition}

\begin{definition}[\textbf{Condition P}]
	The parameter $\alpha$ is said to satisfy \textbf{Condition P} for cell $k$ if 
	\[
	\begin{aligned}
	c_1\of{u^n_k + 2\lambda \left(	\fnn_{\alpha_{k-\frac 1 2}} + f(u^n_k)\right)} \leq 0 \quad
	 c_1\of {u^n_k + 2\lambda \left(f^n_{k} - \fnn_{\alpha_{k+\frac 1 2}}\right)} \leq 0
	 \end{aligned}
	\]
	holds
	\end{definition}
We note that the equivalent of lemma \ref{lem:EalpBound} is in this case
\begin{lemma}
	\textbf{Condition P}  is fulfilled if one of the following conditions is satisfied for $k+\frac 1 4$ and $k-\frac 1 4$.
	\begin{enumerate}
		\item It holds $c_1\of{u^n_k + 2\lambda \left(h^n_{k+\frac 1 2} - f^n_k\right)} \leq 0$ and $\alpha \in [0, 1]$ is arbitrary.
		\item It holds $c_1\of{u^n_k + 2\lambda \left(h^n_{k+\frac 1 2} - f^n_k\right)} > 0$ and $\alpha \geq \frac{c_1\of{u^n_k + 2\lambda \left(h^n_{k+\frac 1 2} - f^n_k\right)}}{c_1\of{u^n_k + 2\lambda \left(h^n_{k+\frac 1 2} - f^n_k\right)} - c_1\of{u^n_k + 2\lambda \left(g^n_{k+\frac 1 2} - f^n_k\right)}}$.
	\end{enumerate}
	\begin{proof}
		The first condition follows from the fact that for the dissipative scheme holds
		\[
		c_1\of{u^n_k + 2\lambda \left(g^n_{k-\frac 1 2} - f^n_k\right)} \leq 0
		\]
		and therefore one concludes
		\[
			\begin{aligned}
		c_1 &\of{u^n_k + 2\lambda (f^{GT}_{\alpha_{k-\frac 1 2}} - f^n_k)} \\
		= &c_1\of{\alpha\left(u^n_k + 2\lambda \left(g^n_{{k-\frac 1 2}} - f^n_k\right) \right) + (1-\alpha) \left(u^n_k + 2\lambda \left(h^n_{{k-\frac 1 2}} - f^n_k}\right)\right) \\
		\leq &\alpha c_1\of{u^n_k + 2\lambda \left(g^n_{k-\frac 1 2} - f^n_k\right)} + (1-\alpha)c_1\of{ u^n_k + 2\lambda \left(h^n_{k-\frac 1 2} - \fnn_k\right)} \leq 0
			\end{aligned}
		\]
		because both values in the convex combination on the right hand side are smaller than zero. The same holds for $\alpha_{k+\frac 1 2}$ The second part is obviously just a statement that implies the second line of the above statement.
	\end{proof}
\end{lemma}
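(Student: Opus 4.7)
The plan is to mirror exactly the argument used for \textbf{Condition $F$} in \cref{lem:EalpBound}, but now applied to the convex functional $c_1 = -p$ rather than to the entropy production. The crucial algebraic fact is that the interface-updated state can be written as a convex combination of the corresponding updates with the dissipative and entropy-conservative fluxes alone. By the definition of the GT-flux in \eqref{eq_GT_flux},
\[
u^n_k + 2\lambda\bigl(f^n_k - f^n_{\alpha_{k+\frac 1 2}}\bigr)
= \alpha_{k+\frac 1 2}\bigl(u^n_k + 2\lambda(f^n_k - g^n_{k+\frac 1 2})\bigr)
+ (1-\alpha_{k+\frac 1 2})\bigl(u^n_k + 2\lambda(f^n_k - h^n_{k+\frac 1 2})\bigr).
\]
Since $p$ is concave as a functional of the conserved variables in \eqref{eq:Euler}, the functional $c_1$ is convex, and Jensen's inequality applied to two points yields
\[
c_1\bigl(u^n_k + 2\lambda(f^n_k - f^n_{\alpha_{k+\frac 1 2}})\bigr)
\leq \alpha_{k+\frac 1 2}\, c_1\bigl(u^n_k + 2\lambda(f^n_k - g^n_{k+\frac 1 2})\bigr)
+ (1-\alpha_{k+\frac 1 2})\, c_1\bigl(u^n_k + 2\lambda(f^n_k - h^n_{k+\frac 1 2})\bigr).
\]

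With this bound in hand the two cases are immediate. For the dissipative flux $g$ (Godunov or Lax--Friedrichs), standard CFL-based positivity preservation gives $c_1(u^n_k + 2\lambda(f^n_k - g^n_{k+\frac 1 2})) \leq 0$. In case 1 the hypothesis makes the second summand nonpositive as well, so the entire convex combination is nonpositive for every $\alpha \in [0,1]$. In case 2 the right-hand side is an affine function of $\alpha$ that is strictly decreasing (since the $g$-value is nonpositive and the $h$-value is strictly positive), and solving the resulting linear inequality in $\alpha$ produces precisely the stated lower bound $\alpha \geq c_1(H\text{-state})/(c_1(H\text{-state}) - c_1(G\text{-state}))$, which automatically lies in $[0,1)$. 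Finally, the same reasoning applied to the left interface at $k-\tfrac 1 2$ controls the other update $u^n_k + 2\lambda(f^n_{\alpha_{k-\frac 1 2}} - f^n_k)$, and \textbf{Condition $P$} is enforced on both interfaces.

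The main (mild) obstacle is not the algebra itself but verifying that convexity of $c_1$ may legitimately be invoked along the line segment joining the two endpoint states, which requires that both endpoints lie in the admissible set $\{\rho > 0\}$. Under the CFL condition assumed for the positivity-preserving flux $g$ the $g$-endpoint is automatic, and the $h$-endpoint is an explicit assumption of the lemma via the value of $c_2$ (density) being controlled by the parallel \textbf{Condition $\rho$}; as long as both conditions are enforced simultaneously, the convexity step is valid and the proof goes through verbatim as above.
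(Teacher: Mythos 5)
Your proof is correct and follows essentially the same route as the paper: write the updated state as a convex combination of the $g$- and $h$-updated states, apply convexity of $c_1$, use positivity preservation of the dissipative flux under the CFL condition for case 1, and solve the resulting linear inequality in $\alpha$ for case 2. Your closing remark about needing both endpoint states in the admissible set for the convexity of $-p$ to apply is a legitimate subtlety that the paper's proof does not address, but it does not change the argument.
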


\begin{lemma}
	If \textbf{Condition P}  is satisfied holds
	\[
		p\of{u^n_k + \lambda \left(\fnn_{\alpha_{k-\frac 1 2}} - \fnn_{\alpha_{k+\frac 1 2}} \right)} \geq 0,
	\]
	whenever the CFL restriction $\lambda c_{max} < 0.5$ is respected.
	\begin{proof}
		The convexity implies as before
		\[
		\begin{aligned}
			-p\of{u^{n+1}_k} = &c_1\of{u^{n+1}_k} = c_1\of{u^n_k + \lambda\left(\fnn_{\alpha_{k- \frac 1 2}} - \fnn_{\alpha_{k+\frac 1 2}}\right)} \\
			= &c_1\of{\frac{u^n_k + 2 \lambda \left (\fnn_{\alpha_{k-\frac 1 2}} - f^n_k\right) + u^n_k + 2\lambda\left(f^n_k - \fnn_{\alpha_{k+\frac 1 2}}\right)  }{2}} \\
			\leq &\frac 1 2 \left (c_1 \of {u^n_k + 2 \lambda \left(\fnn_{\alpha - \frac 1 2} - f^n_k\right)} + c_1 \of {u^n_k  + 2\lambda\left(f^n_k - \fnn_{\alpha + \frac 1 2}\right)} \right) \\ 
			\leq& 0.
			\end{aligned}
		\]
		\end{proof}
	\end{lemma}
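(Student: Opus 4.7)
The plan is to mirror the splitting argument used in \cref{thm:deie}, replacing the convex entropy $U$ by the convex functional $c_1 = -p$. The essential observation is that $p$ is concave in the conserved variables $(\rho, \rho v, E)$, so $c_1$ is convex, which is exactly what powered the entropy proof. Everything else is bookkeeping.

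First I would write the updated cell mean as the arithmetic average of two half-step intermediate states, exactly as at the beginning of \cref{thm:deie}:
\[
u^{n+1}_k \;=\; \frac{\bigl(u^n_k + 2\lambda(\fnn_{\alpha_{k-\frac 1 2}} - f^n_k)\bigr) + \bigl(u^n_k + 2\lambda(f^n_k - \fnn_{\alpha_{k+\frac 1 2}})\bigr)}{2}.
\]
This identity is purely algebraic and does not depend on the choice of $\alpha$ or on which fluxes are blended. Then I would apply Jensen's inequality to $c_1$, which yields
\[
c_1(u^{n+1}_k) \;\leq\; \tfrac{1}{2}\,c_1\!\bigl(u^n_k + 2\lambda(\fnn_{\alpha_{k-\frac 1 2}} - f^n_k)\bigr) + \tfrac{1}{2}\,c_1\!\bigl(u^n_k + 2\lambda(f^n_k - \fnn_{\alpha_{k+\frac 1 2}})\bigr).
\]
By the definition of \textbf{Condition P}, both terms on the right are non-positive, so $c_1(u^{n+1}_k) \leq 0$, which is precisely $p(u^{n+1}_k) \geq 0$.

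The role of the CFL restriction $\lambda c_{\max} < 0.5$ is to make the splitting meaningful: each half-step state is of the form $u^n_k + 2\lambda(\text{flux difference})$, i.e.\ a single forward Euler step of the blended scheme with effective ratio $2\lambda$. For this effective step to be compatible with the positivity of the underlying dissipative flux $g$ (Godunov or Lax–Friedrichs), one needs $2\lambda c_{\max} < 1$, which is exactly the stated bound. I would mention this in a line at the end so that the reader sees why the factor of $2$ appears.

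The only place where I expect a subtlety is conceptual rather than technical: one must be careful that the convexity of $c_1$ is used in the right direction, since pressure is a concave function of the conserved state. I would state this explicitly before invoking Jensen, noting that preserving $p \geq 0$ is equivalent to enforcing the upper bound $c_1 \leq 0$ on the convex functional $c_1 = -p$, which is exactly the setting in which Jensen's inequality yields the desired conclusion. Beyond this remark the proof is a direct transcription of the entropy argument and contains no real obstacle.
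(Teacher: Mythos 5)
Your proposal is correct and follows essentially the same route as the paper: writing $u^{n+1}_k$ as the arithmetic mean of the two half-step states, applying convexity of $c_1=-p$, and invoking \textbf{Condition P} to make both terms non-positive. Your added remarks on the factor of two in the CFL bound and on the concavity of the pressure are consistent with the paper's argument and add no gap.
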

	The result for \textbf{Condition $\rho$} is obviously the same and this results apply for every convex functional that is bounded by the low order scheme used in the construction. \\
Conditions on the physical constraints are not new and used in many approaches. We like to recommend the following papers from the literature which uses at least from our point of view similar ideas \cite{rueda2022subcell, kuzmin2020monolithic, kuzmin2021Limiter}.
\section{Limiting via Neural Networks} \label{se_neunet}

\subsection{Basics of Feedforward Networks}

In this section, we explain how we select our numerical flux using  feed forward neural networks (FNN).
The network is used to determine the local  indicator $\alpha$ which 
steers our convex combination inside the numerical flux and measures the regularity. This is an example of 
a high dimensional function interpolation. Our optimism concerning this problem stems from the following result proved in \cite{Cybenko1989}:
\begin{theorem}
	Let $\sigma: \R \to \R$ a sigmoidal function. Then the finite sum of the form
	\[
			\mathcal A \circ G(x) = \sum_{j=1}^N \alpha_j \sigma(\scp{y_j}{x} + b_j)
	\]
	are dense in $C(I_n)$ in the sup norm.
\end{theorem}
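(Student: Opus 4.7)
The plan is to proceed by contradiction using a standard functional--analytic argument: assume the set $S$ of finite sums of the form $\sum_{j=1}^N \alpha_j \sigma(\scp{y_j}{x} + b_j)$ is not dense in $C(I_n)$ with respect to the sup norm, and derive a contradiction by showing that no nonzero annihilating measure on $I_n$ can exist.

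First I would suppose $\overline{S} \subsetneq C(I_n)$. By the Hahn--Banach theorem there exists a nonzero bounded linear functional $L$ on $C(I_n)$ which vanishes on $\overline{S}$. The Riesz representation theorem then yields a nonzero finite signed regular Borel measure $\mu$ on $I_n$ such that $L(f) = \int_{I_n} f(x) \intd \mu(x)$. Since every function of the form $x \mapsto \sigma(\scp{y}{x} + b)$ lies in $S$, the functional $L$ annihilates each of them, giving
\[
\int_{I_n} \sigma(\scp{y}{x} + b) \intd \mu(x) = 0 \quad \text{for all } y \in \R^n,\ b \in \R.
\]

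The main obstacle is to show that a sigmoidal $\sigma$ is \emph{discriminatory}, meaning that the identity above forces $\mu = 0$. I would use a rescaling trick: for fixed $y, b, \varphi$ and $\lambda > 0$, set $\sigma_\lambda(x) = \sigma(\lambda(\scp{y}{x} + b) + \varphi)$. The sigmoidal limits ($1$ at $+\infty$ and $0$ at $-\infty$) imply that $\sigma_\lambda(x) \to \gamma(x)$ pointwise as $\lambda \to \infty$, where $\gamma$ equals $1$ on the open half-space $\{\scp{y}{x} + b > 0\}$, equals $0$ on $\{\scp{y}{x} + b < 0\}$, and equals $\sigma(\varphi)$ on the separating hyperplane. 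Since $\sigma$ is bounded, dominated convergence yields $\int_{I_n} \gamma \intd \mu = 0$. Varying the affine parameters $y$, $b$, and $\varphi$, one concludes that $\mu$ annihilates the indicator of every open affine half-space of $\R^n$ intersected with $I_n$, and also of every hyperplane.

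To close the argument, I would fix an arbitrary $y \in \R^n$ and consider the bounded linear functional $F \mapsto \int_{I_n} F(\scp{y}{x}) \intd \mu(x)$ acting on bounded Borel functions $F$ on $\R$. The previous step shows it vanishes on the indicator of every half-line, hence by linearity on every simple function built from intervals, and therefore on every bounded continuous function on $\R$. Taking $F(t) = \cos(s t)$ and $F(t) = \sin(s t)$ for all $s \in \R$, this says that the Fourier transform of the pushforward measure $y_\ast \mu$ vanishes identically, so $y_\ast \mu = 0$. As this holds for every $y \in \R^n$, a classical injectivity argument (for example, injectivity of the Radon transform of a compactly supported finite signed measure) forces $\mu = 0$, contradicting $L \neq 0$. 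This contradiction establishes that $S$ is dense in $C(I_n)$.
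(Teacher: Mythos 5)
Your proposal is correct: it is precisely Cybenko's original argument (Hahn--Banach plus Riesz representation to get an annihilating measure, the scaling limit showing a bounded sigmoidal function is discriminatory via indicators of half-spaces, and the Fourier-transform step on one-dimensional projections to force $\mu=0$). The paper does not prove this theorem itself but only cites it from \cite{Cybenko1989}, so your argument coincides with the proof in the cited source and there is nothing to reconcile.
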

This theorem motivates the usage of FNN to approximate any function 
\begin{equation}
\mathcal C \subset C(\R^n, \R)
\label{eq:function_network}
\end{equation}
 on a constrained subset of $\R^n$. We will therefore give a short presentation of the general theory of neuronal networks.
Our feed forward network (also called 
multilayer perceptron (MLP)) is on particular example and it is set up in a sequence of layers containing a certain amount of neurons (computing units). 
The first layer (source layer) is handling the input data/signal to the network whereas the output layer (last layer) translates the new solution back. 
In between hidden layers are placed where the calculations are done. An FNN  with depth $K$ contains $K-1$ hidden layers and one output layer.
What happens in the network it the following operation: For an input signal $\mathbf{X} \in \R^n$, we have the output:
\begin{equation}\label{eq_function_network_2}
\mathbf{\tilde{Y}} = \mathcal{F} \circ G_k \circ \mathcal{A} \circ G_{k-1} \mathcal{A} \circ G_{k-2} \circ \cdots \circ G_1(\mathbf{X}) ,
\end{equation}
where $G_k$ denotes the affine transformation of the $k-$layer on a vector $\mathbf{Z}\in \R^{N_{k-1}}$ with 
\begin{equation}\label{eq:G_k}
G_k(\mathbf{Z}) = \mathbf{W}_k \mathbf{Z} + \mathbf{b}_k, \qquad \mathbf{W}_k  \in \R^{N_k \times N_{k-1}}, \quad \mathbf{b}_k \in \R^{N_k}.
\end{equation} 
$\mathbf{W}_k$ are the weights matrices and $\mathbf{b}_k$ are the bias vectors. Both contain the trainable parameters which we specify in the following. 
Further, in \cref{eq_function_network_2}, $\mathcal{A}$ are non-linear activation functions and  $\mathcal{F} $ is a non-linear output function that transforms 
the output data into a for us suitable form. There exists a bench of different activation functions for various problems. In our work, 
we restrict ourself to the currently popular Exponential Linear Units  (ELU) function 
\begin{equation}
\mathrm{ELU}(t) = \begin{cases}x, & x > 0, \\ \gamma (\exp(x)-1), & \text{else.} \end{cases}
\end{equation}
 We set $\gamma\equiv 1$ in our numerical simulations.\\
To approximate finally \eqref{eq:function_network} with our network \eqref{eq_function_network_2}, we must train the parameters using our training data. 
Therefore, we first create a set of training data with $N_T$ samples 
$$
T= \left\{ (\mathbf{X}_i, \mathbf{Y}_i): \mathbf{Y}_i=\mathcal{C}(\mathbf{X}_i) \forall i=1, \dots, N_T \right\}.
$$
Then, we define a suitable cost function that measures the discrepancy between the actual result vector $\mathbf{Y}$ 
and the predicted result vector $\mathbf{\tilde{Y}}$. We tested the following three cost functions:

\begin{enumerate}
\item Mean square error
$
	L(Y, \tilde Y) = \frac{\sum_{i=1}^{N_T} (Y_i - \tilde Y_i)^2}{N_T},
$
\item  Mean exponential error $L(Y, \tilde Y) = \frac{\sum_{i=1}^{N_T} (\exp(Y_i - \tilde Y_i) - 1)^2}{N_T}$,
\item 
the nonsymetric loss
$
	L(y, \tilde Y) = \frac {\sum_{i=1}^{N_T}d(Y, \tilde Y) }{N_T} \text{ with }  d(Y, \tilde Y) = \begin{cases} \gamma (Y_i - \tilde Y_i)^2 & Y_i \geq Y_i \\ \abs{Y_i - \tilde Y_i} & Y_i \leq Y_i \end{cases}
$
 and $\gamma = 10$. It gives the absolute error if the prediction is bigger than the result and the squared error if the prediction is smaller than the desired value.
\end{enumerate} 
Note that usually the second function results in higher penalties for under-prediction and that the third allows for sparse output in the domain.
 To train the network, we minimize the loss function with respect to the  parameters $\{ \mathbf{W}_k, \mathbf{b}_k \}_k$ over the set of training data. For the minimization process, we use an iterative optimization algorithm. 
We use the ADAM minimizer \cite{kingma2017adam}, alternatively we can apply the  momentum gradient descent. Both are used in the stochastic gradient descent fashion \cite{Rumelhart1986Learning}.

\begin{remark}[Overfitting and Dropout Layer]
As mentioned \textit{inter alia } in \cite{discacciati2020controlling}, the training set has to be selected quite carefully to avoid over-fitting. In such a case, the network performs poorly on general data since it is highly optimized for the training set.  To avoid this problem, a regularization technique is used. 
A popular regularization strategy is using a drop-out layer \cite{srivastava2014dropout}.
 During each optimization update step in the training-phase of the network, a dropout layer in front of the kth layer randomly sets a predefined fraction of the components of the intermediate vector computed by the kth  layer to zero. The advantages of this technique  are that the training is not biased towards a specific network architecture, additional stochasticity is injected into the optimization process to avoid getting trapped in local optima, and a sparsity structure is introduced into the network structure.
\end{remark}

\subsection{Data Driven Scheme for Single Conservation Laws and Systems}
 Our method using neuronal nets will be based on the fully discrete approach, but we will use neuronal nets as building blocks to approximate unknown real maps in the following roadmap:
\begin{enumerate}
	\item Select a random set of initial conditions $\uinit = \{u_1, u_2 , \dots, u_N \} \subset C^1_s$.
	\item Calculate high quality numerical solutions $v(x, t)$ to this set of initial conditions.
	\item Determine Projections $u$ of these solutions $v$ to a low resolution finite volume mesh.
	\item Calculate the exact flux of $v(x, t)$ over the given mesh boundaries and suitable \textbf{ time interval}.
	\item Infer suitable values for the convex combination parameter $\alpha$ \textbf{for the GT flux \eqref{eq_GT_flux}}. 
	\item Use this database to train a neuronal network as a predictor for the unknown map $\alpha(u, \Delta t)$.
\end{enumerate}
The high quality numerical solution $v$ was calculated using classic finite volume methods on fine grids. The projection of these solutions to a low resolution mesh is given by
\[
	u_k = \frac{1}{\Delta x}\int_{x_{k-\frac 1 2}}^{x_{k+\frac 1 2}} v(x, t) \vd x \quad \text{with} \quad v(x, t) = \sum_k v_k(t)\chi_{\omega_k}(x),
\] 
where $\omega_k$ shall be cell $k$ of the fine grid and $v_k$ the mean value of the solution as approximated by a finite volume method. The calculation of an accurate numerical flux  approximation $\fprec$ at cell boundaries of the coarse grid is based on numerical quadrature in time, i.e.
\[
	\fprec_{k+\frac 1 2} = \nquad_{t^n}^{t^{n+1}} g\of{v\left(x^-_{k+\frac 1 2}, \cdot\right), v\left(x^+_{k+\frac 1 2}, \cdot \right)} \approx  \int_{t^n}^{t^{n+1}} f(v(x, t))  \vd t.
\]
Our numerical tests used low order quadrature methods as we are especially interested in flux values for nonsmooth $u$ in space and time. Therefore high-order quadrature rules would be of little use. Our next problem consists of finding a suitable and well defined $\alpha_{k+\frac 1 2}$ that satisfies
\[
	\fnnnn_{\alpha_{k + \frac 1 2}} \approx \fprec_{k+\frac 1 2}.
\]
We therefore formulate the following definition
\[
	\fnnnn_{\alpha_{k + \frac 1 2}} = \min_{f \in \ch\of{h^n_{k+\frac 1 2}, g^n_{k+\frac 1 2}} }\norm{f - \fprec_{k+\frac 1 2}}_2 = \PR_{\ch\of{h^n_{k+\frac 1 2}, g^n_{k+\frac 1 2}} }\fprec
\]
of the target value of the neural network GT flux as the solution of a constrained optimization problem, i.e. the projection of the flux to the convex hull of the dissipative low order and non-dissipative high-order flux. This formulation is usable in single conservation laws as well as for systems of conservation laws and extends also to more general convex combinations if additional fluxes are added for the usage in convex combinations. A different norm or different convex functional could be used instead, investigation in such direction will be open for the future.
 An additional complexity stems from the fact while that the above minimization problem always has a unique solution as both the objective as also the domain is convex the situation is worse for the following related minimization problem
\[
	\alpha = \argmin_{\tilde \alpha \in [0, 1]} \norm{\fnn_{\tilde\alpha} - \fprec_{k+\frac 1 2}}_2.
\]
The solution is in fact not unique for $g = h$ which happens for example for $u = \mathrm{const.}$. We  make use of the following definition 
\[
	\alpha = \max\of{ \argmin_{\tilde \alpha \in [0, 1]} \norm{\fnn_{\tilde\alpha} - \fprec}_2}
\]
to select the most dissipative value of $\alpha$ in the degenerate case. The numerical solution of this problem in the case of the $2$-norm is based on the usage of the Penrose inverse 
\[
	b = \fprec - g, \quad A = h-g, \quad \beta = \min(1, \max(0, A^\dagger b)), \quad \alpha = 1- \beta,
\]
as a simple calculation shows. 
The affine-linear map \[M_{k+\frac 1 2}: \R \to \R^p, \beta \mapsto \beta h_{k+\frac 1 2} + (1-\beta) g_{k+\frac 1 2} = g_{k+\frac 1 2} + \alpha (h_{k+\frac 1 2} - g_{k+\frac 1 2}) = w + A\alpha\] can be expressed in the standard basis using the matrix $A_{k+\frac 1 2} = h_{k+\frac 1 2} - g_{k+\frac 1 2}$ and the support vector $w_{k+\frac 1 2} = g_{k+\frac 1 2}$. The value $\beta$ controls an affine combination, where $\beta = 1- \alpha$ yields the identical value as before using the blending scheme. One therefore finds 
\[
	\argmin \norm{w + A\beta - f^{n, precise}}_2 = \argmin \norm{A\beta - \underbrace{(f^{n, precise} - w)}_{b}}_2 = A^\dagger b
\]
for the projection of $f^{n, precise}$ onto the subspace $\ran{M}$.
As the Penrose inverse is not only the least squares, but also the least norm solution. It has also the smallest absolute value, i.e. $\beta = 0$, in the case that $A$ is degenerate. The distinction between $\alpha$ and $\beta$ was made to enforce $\alpha = 1$ for degenerate A. As we are interested in the projection of $f^{n, precise}$ onto $M_{k+\frac 1 2}([0, 1])$, one concludes that if the unconstrained minimizer lies outside of the image of $[0, 1]$ under $M$, the constrained minimizer must be one of the edges and in fact, the edge lying nearer to the unconstrained minimizer. This yields the given formula for the minimizer.

\section{Polynomial Annihilation Based Scheme} \label{se_regul}

In this chapter, we want to propose another possibility to approximate blending parameter $\alpha$. Therefore, the construction of polynomial annihilation operators in one spatial dimension is visited at first. Since these operators approximate the jump function of a given sensing variable, we use them in a second step to propose a choice of $\alpha$.

\subsection{Polynomial Annihilation-Basic Framework}

The general idea of polynomial annihilation (PA) operators proposed in \cite{archibald2005polynomial} is to approximate the jump function
\begin{equation}
 [s](x) = s(x^+)-s(x^-)
\end{equation}
for a given $s:\Omega\rightarrow\mathbb{R}$ referred to as the sensing variable. Therefore, we want to construct an operator $L_m[s](\xi)$ which gives an approximation of $[s](\xi)$ of $m$-th order. 

For a given $\xi\in\Omega$, we first choose a stencil of $m+1$ grid points around $\xi$
\begin{equation}
 S_\xi = (x_k,\ldots, x_{k+m})\hspace{1em}\text{with}\hspace{1em} x_k\leq \xi\leq x_{k+m}.
\end{equation}
The name giving polynomial annihilation is performed in the next step by defining the annihilation coefficients $c_j$ implicitly by
\begin{equation}
 \sum_{x_j\in S_\xi} c_j p_l(x_j) = p_l^{(m)}(\xi)
\end{equation}
where $\{p_l\}_{l=0}^m$ is any basis of the space of all polynomials with degree $\leq m$. Note that the coefficients $c_j$ only depend on the choice of the stencil $S_\xi$ since the $m$-th derivative $p_l^{(m)}$ of a polynomial with degree $\leq m$ is constant. Expanding this approach to higher spatial dimensions would cause the $c_j$ to directly depend on $\xi$.
Finally, we also need a normalization factor $q_m$ calculated by
\begin{equation}
 q_m = \sum_{x_j\in S_\xi^+} c_j,
\end{equation}
where $S_\xi^+ = \{x_j\in S_\xi| x_j\geq\xi\}$. The normalization factor $q_m$ is constant for fixed choice of $S_\xi$. We define the PA operator of order $m$ now by
\begin{equation}
 L_m[s](\xi) = \frac{1}{q_m} \sum_{x_j\in S_\xi} c_j s(x_j).
\end{equation}
In \cite{archibald2005polynomial} it was shown that
\begin{equation}
 L_m[s](\xi) =\begin{cases}
 			   [s](\tilde{x}) + \mathcal{O}\left(\tilde{h}(\xi) \right), &\text{if } x_{j-1}\leq\xi,\tilde{x}\leq x_j,\\
 			   \mathcal{O}\left( (\tilde{h}(\xi))^{\min(m,l) }\right) , & \text{if } s\in C^l([x_k,x_{k+m}]).
 			  \end{cases}
\end{equation}
Here, $\tilde{x}$ denotes a jump discontinuity of $s$ and $\tilde{h}(\xi):=\max\{|x_i-x_{i-1}|~|~x_i,x_{i-1}\in S_\xi \}$.

\subsection{Scheme based on polynomial annihilation}

Based on the above-presented framework, we now construct the convex parameter $\alpha$. Since entropy could be dissipated in cells where discontinuities occur, the parameter is supposed to be $0$ in regions where the solution is smooth and $1$ in discontinuity containing cells. This can be achieved by PA operators which are not constructed to give the location of a discontinuity but to approximate the height of the jump at that location. Hence, we need to normalize the operator by a factor approximating the height of a typical jump, i.e. $\frac{1}{z}L_{2p}[s]$ with a normalization factor $ z\approx[s](\tilde{x})$. Here, the PA operator is used on the four-point stencil $(x_{k-p+1},\ldots,x_{k+p})$ and the corresponding mean values $(u_{k-p+1}^n,\ldots,u_{k+p}^n)$ for  a given $n$. 
This normalization factor $z$ is also provided by a PA operator. Therefore, we apply $L_{2p}$ to the idealized values 
\begin{equation*}
 (u_{\max}^n,\ldots,u_{\max}^n,u_{\min}^n,\ldots,u_{\min}^n)
\end{equation*}
based on the same four-point stencil where 
\begin{equation*}
 u_{\max}^n = \max\{u_{k-p+1}^n,\ldots,u_{k+p}^n\},\hspace{1em} u_{\min}^n = \min\{u_{k-p+1}^n,\ldots,u_{k+p}^n\}.
\end{equation*}
With this normalization factor, the natural selection of $\alpha$ would be
\begin{equation*}
 \alpha = \frac{L_{2p}[u]}{z}.
\end{equation*}
This choice does not fulfill the before mentioned recommended property since the normalization gives a much more accurate approximation of the jump height. Actually, the jump function is approximated flatter in case of using $L_{2p}[u]$ on $(u_{k-p+1}^n,\ldots,u_{k+p}^n)$. Another occurring problem is the normalization factor $z$ equal to zero. That is whenever $u_{\max}^n= u_{\min}^n$ holds. A solution can be obtained by a simple regularization. Considering both these issues, we choose
\begin{equation}
 \alpha^n = \frac{c_1 L_{2p}[u]}{z + c_2},
\end{equation} 
where $c_2>0$. Experiments showed that $c_1=10$ is an appropriate choice to compensate the difference between the accuracies of the approximations. The regularization is picked as $c_2 = \|\bu\|_1$, where
\begin{equation*}
 \|\bu^n\|_1 = \sum_{i=1}^N \frac{|u_i^n|}{N}\mu(\Omega)
\end{equation*}
is the discrete $L^1$-norm. The order of the applied PA operators is selected as $p=4$ in this work. In a last step, we apply a sup-mollification to define the predictor
\begin{equation}
 \tilde{\alpha}^n = \alpha^n\circledast \max\left \{1-\frac{1}{3}\norm{\frac{x}{\Delta x}},0 \right\}.
\end{equation}

\section{Numerical Experiments}\label{se_numerics}
In the following part, we investigate and compare the approaches described before in our blending schemes \ref{def_Blending}.  Especially, we
focus on the following questions: 
\begin{itemize}
	\item Are all schemes  able to capture shocks and are they oscillation free?
	\item Does the discrete entropy inequality \ref{eq:eie} also hold for the PA and data-driven selection of $\alpha$? Are the schemes violating other physical constraints like the positivity of density and pressure?
	\item Which order of convergence can we expect from our schemes?
\end{itemize}
We test our schemes on the Euler equations of gas dynamics  \eqref{eq:Euler}  and use the mathematical entropy   \eqref{eq:EulerPE}. To test the high-order accuracy of the proposed schemes, we consider a smooth connected density variation from  \cite{klein2021using}. To test and compare our schemes for problems incorporating
strong shocks, we focus on the famous Shu-Osher problem (Test Number 6 from  \cite{SO1988, SO1989}).
The Shu-Osher test case is nowadays working as  a benchmark problem in computational fluid dynamics. 
We compare four different schemes with each other and also tested the Dafermos criterion based scheme from \cite{klein2021using}. We consider in detail: %
\begin{enumerate}
	\item The discrete local entropy stable scheme with $\alpha$ dictated by \textbf{Condition F} using a second-order entropy conservative flux with SSPRK(2, 2) time integration and the local Lax-Friedrichs flux (LLF) with forward Euler method. We denote the scheme in the following with DELFT.
	\item A positivity preserving scheme with $\alpha = \max(\alpha_\rho, \alpha_p)$ determined by \textbf{Condition $\rho$} and \textbf{Condition $P$}. This scheme is using the fourth-order entropy conservative flux and  the LLF flux with SSPRK(3, 3) time integration. The scheme is called PPLFT. 
	\item A data-driven scheme, $\alpha$ is determined using  neuronal nets. The high-order flux is again fourth-order entropy conservative combined with an SSPRK(3,3) while the low order entropy stable flux is given by the  LLF flux together with an SSPRK(2,2) method in time.
The scheme is denoted with DDLFT. 
	\item A polynomial annihilation based scheme where $\alpha$ is determined through the technique described in \cref{se_regul}. The high-order flux is the fourth-order entropy conservative flux with SSPRK(3,3) quadrature while the low order entropy stable flux is given by the Lax-Friedrichs scheme using the SSPRK(2,2) quadrature in time. The PA operators used are of fourth-order. We denote the scheme with PALFT.
	\item The Dafermos criterion based method developed and proposed in \cite{klein2021using}.
\end{enumerate}
The different decisions concerning the fluxes and their time quadrature are rooted in the following observations. The stencil of the fourth-order in space, SSPRK(3,3) in time flux is 12  cells wide, whereas the second-order in space and second-order in time flux has only  4 points wide stencils. Therefore, a discontinuity results in bigger values for $\alpha$ using \textbf{Condition $F$} if the base stencil is wider because even a distant discontinuity impacts the selection of $\alpha$. As a small $\alpha$ implies a more accurate method, we therefore use the smaller stencils for the provably discretely entropy dissipative method.
To avoid the decrease in accuracy for the high-order methods even in smooth regions, one can use subcell techniques \cite{Harten1989ENO, rueda2022subcell} instead.  \\
The PPLFT method uses uniform SSPRK time integration on the other hand as we expect no gains from splitting the time integration in this case. The recalculation of $\alpha$ was therefore carried out in every sub step of the RK method for PPLFT method, whereas it was carried out only once in every time step for the last two methods (PALFT and DDLFT). The splitting of the time integration therefore allows us to make use of a speed improvement by only calling the neuronal network once every time step. A second reason to use the quadrature based definition of high-order time integration lies in the fact that the $\alpha$ used as training data for the network is the least-squares solution described in section \ref{se_neunet}. A more accurate flux allows the value of $\alpha$ to be shifted towards zero in smooth regions in the training, therefore lowering the numerical dissipation. The same combination of fluxes are also used for the last scheme for comparison reasons. Before comparing our schemes, we explain how we generate our training data for DDLFT.
	\subsection{Calculation of Training Data}
		We used initial data
	\[
		\begin{pmatrix}\tilde \rho_0(x, t)\\ \tilde v_0(x, t) \\ \tilde p_0(x, t) \end{pmatrix} = \sum_{k=1}^{N} a_k \sin(2\pi k x) + b_k\cos(2 \pi k x)
	\]
	with $N = 100$ and the vectors $a, b \in \R^{3 \times N}$ were selected by selecting random $\tilde a, \tilde b \in [0, 1]^{3 \times N}$ and afterwards scaling them by $
		\hat a_k = \frac {\tilde a_k}{k^{c}}$ and $ \hat b_k = \frac{\tilde b_k}{k^c}.$
	This scaling is based on the fact that the Fourier coefficients $a_k, b_k$ of functions $f \in C^l$ satisfy $a_k, b_k \in \mathcal O(1/k^l)$ and we used therefore $c =2.0$ to generate random functions with suitable regularity. We would like to ensure positive pressure and density with a suitable amplitude to give a representative picture of the dynamics of the system but without an unsuitably small CFL restriction on the time step. Therefore, the following modifications were done 
	\[
		\begin{aligned}
			 a_{1, k} &= (0.2 + A_\rho)\hat a_{1, k}, \quad  \hat  a_{2, k} = A_v a_{2, k},\quad  \hat a_{3, k} = (0.3 + A_p), \\
			\rho_0(x, t) & = \tilde \rho_0(x, t) - \min_x(0, \tilde \rho_0(x, t)), \quad  v_0(x, t) = \tilde v_0(x, t) + B_v, \\  p_0(x, t)  &= \tilde p_0(x, t) - \min_x(p_0(x, t), 0)
		\end{aligned}
	\]
	to ensure positivity. The values $A_\rho, A_v, B_v, A_p$ were randomly selected to satisfy $A_\rho \in [0, 2], A_v \in [0, 2], B_v \in [-2, 2], A_p \in [0, 4]$.  
	The generated initial data was afterwards solved  by a second-order MUSCL scheme and SSPRK(3,3)  with $4000$ cells. The MUSCL scheme was selected because it is a highly robust method. In total $32$ different initial conditions were generated, solved up to $t = 1$ and sampled at every cell interface of the coarse grid at $100$ equally spaced times in the available interval.
	\subsubsection*{Layout and Training of the Network}
	\begin{table}
		\begin{tabular}{ccccccc}
			Layer 				& input & 2 	& 3 	& 4 	& 5 	& output\\
			Activation 			& ELU 	& ELU 	& ELU 	& ELU 	& ELU 	& $x \to x$\\
			Number of Neurons 	& 40 	& 80 	& 80 	& 80 	& 80 	& 1\\
		\end{tabular}
	\caption{Used network structure}
	\label{tab:net}
	\end{table}
	We use a neural  network built out of six layers whose dimensions are given in table \ref{tab:net}. In all, but the last layer, the $ELU$ activation function is applied. The inputs are  the values of the conserved variables and the pressure of five cells left and right to the cell boundary where $\alpha$ has to be determined.
	Our network for the prediction of $\alpha$ was trained using the ADAM optimizer \cite{kingma2017adam} with parameters scheduled as given in  \cref{tab:sched}.
		The resulting loss curve is printed in \cref{fig:loss}. The training took circa 20 minutes on 8 cores of a AMD Ryzen Threadripper 5900X at 3.7 Ghz. 
	\begin{table}
		\begin{center}
		\begin{tabular}{cc cc cc cc}
			Section 	& 1 	& 2 	& 3 	& 4 	& 5 	& 6		& 7 \\
			Epochs 		& 25 	& 25 	& 25 	& 25 	& 25 	& 25 	& 25\\
			Batchsize 	& 32 	& 256 	& 1024 	& 4096 	& 4096 	& 4096  & 4096\\
			Stepsize  	&0.001 	& 0.001 & 0.001	& 0.001	& 0.0001& 0.00001& 0.000001\\
		\end{tabular}
		\caption{Overview of used training parameters}
		\label{tab:sched}
		\end{center}
	\end{table}	
	\begin{figure}
	\begin{center}
	\includegraphics[width=0.5\textwidth]{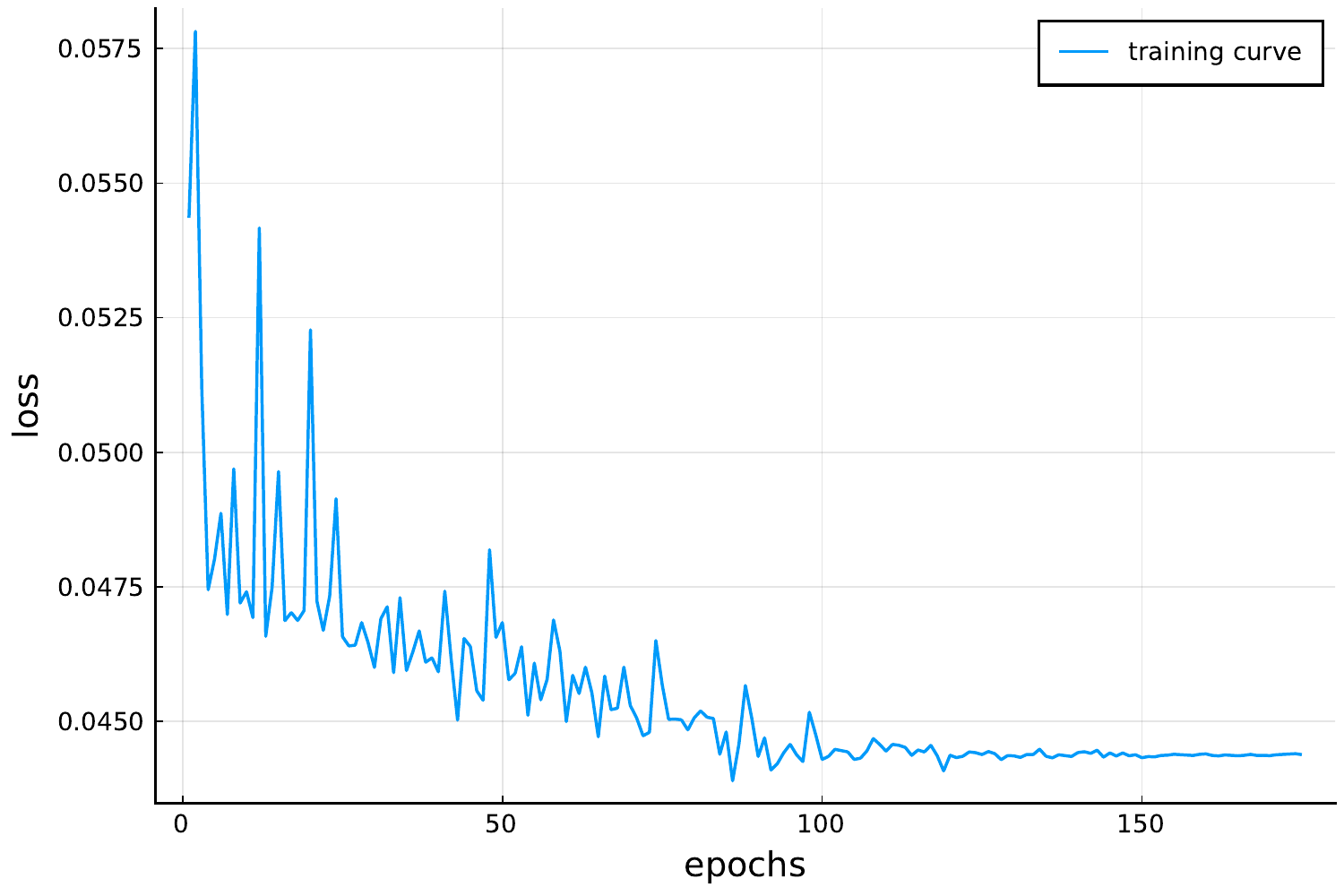}
		\caption{Training loss of the neuronal network.}
		\label{fig:loss}
		\end{center}
		\end{figure}
	\subsection{Testing of the Schemes}
	\begin{figure}
		\centering
		\begin{subfigure}{0.48\textwidth}
			\includegraphics[width=\textwidth]{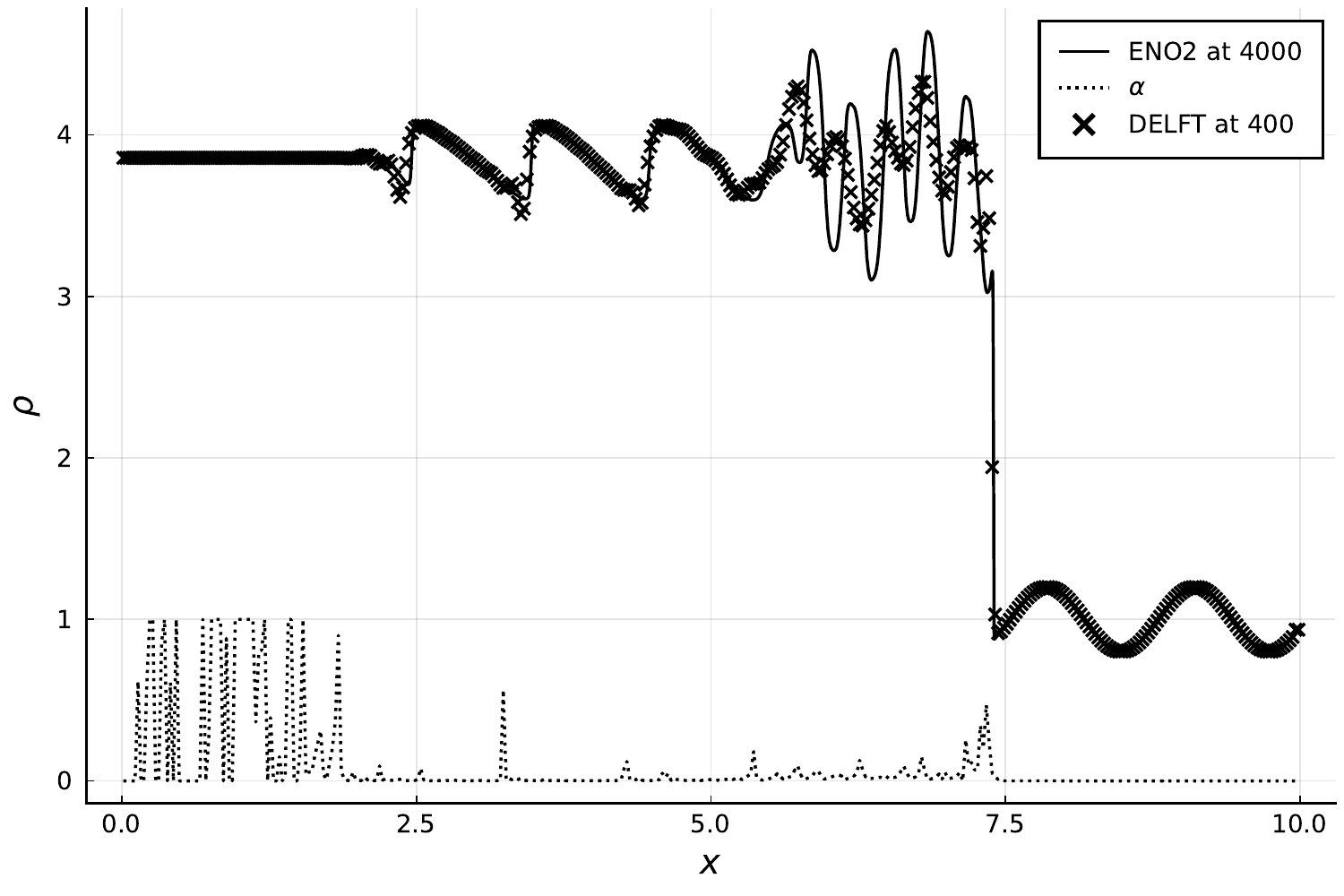}
		\end{subfigure}
		\begin{subfigure}{0.48\textwidth}
			\includegraphics[width=\textwidth]{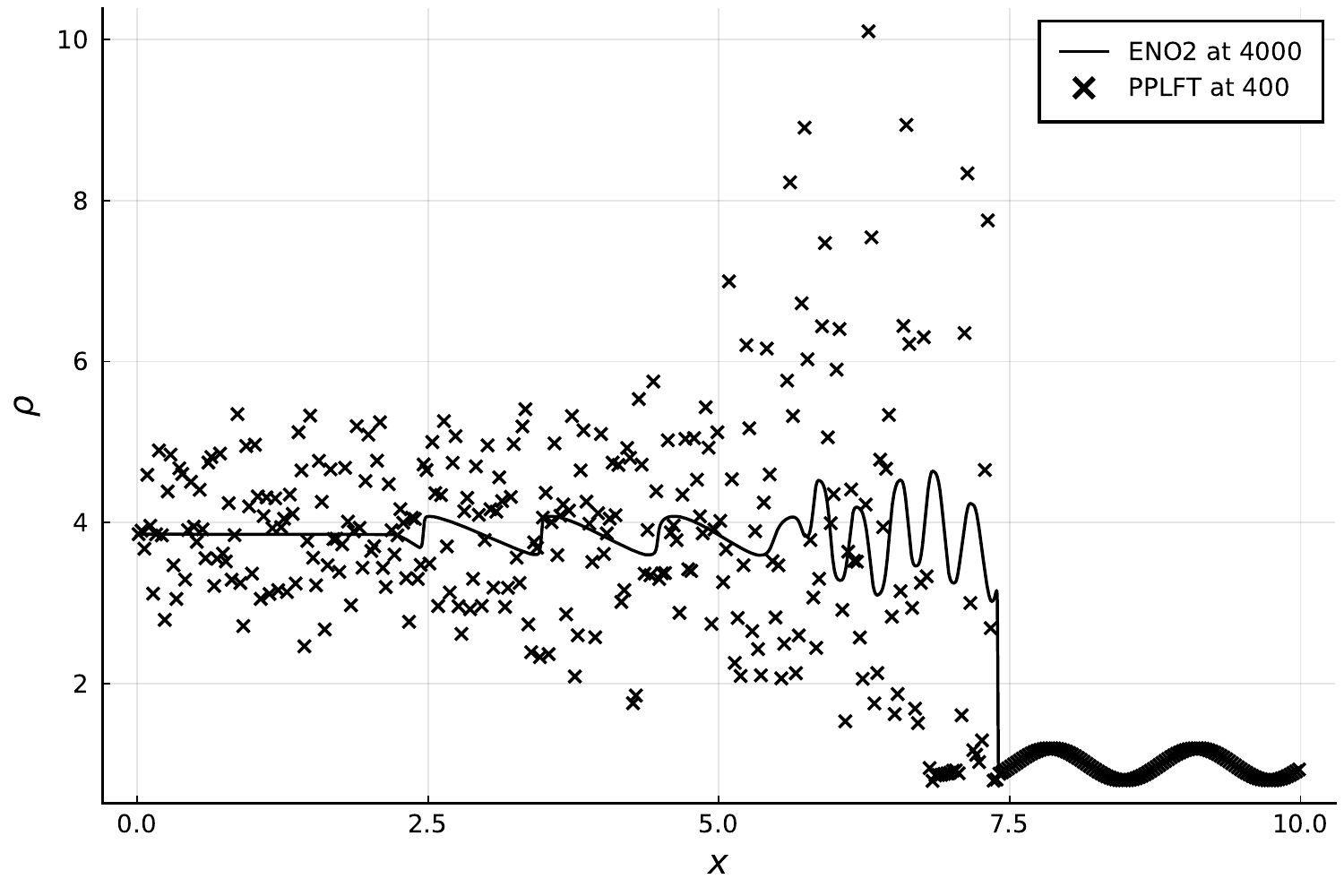}
		\end{subfigure}
		\begin{subfigure}{0.48\textwidth}
			\includegraphics[width=\textwidth]{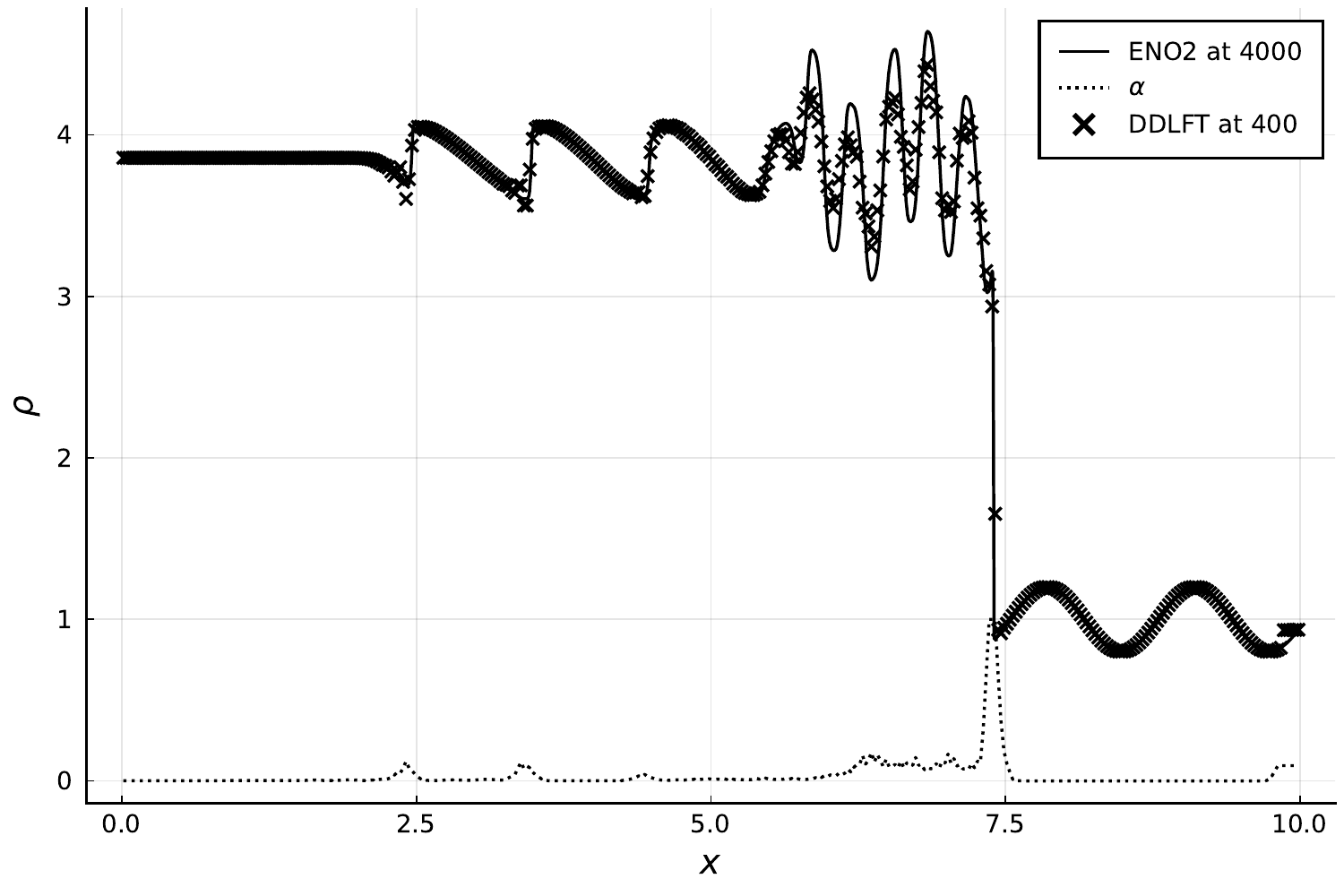}
		\end{subfigure}
		\begin{subfigure}{0.48\textwidth}
			\includegraphics[width=\textwidth]{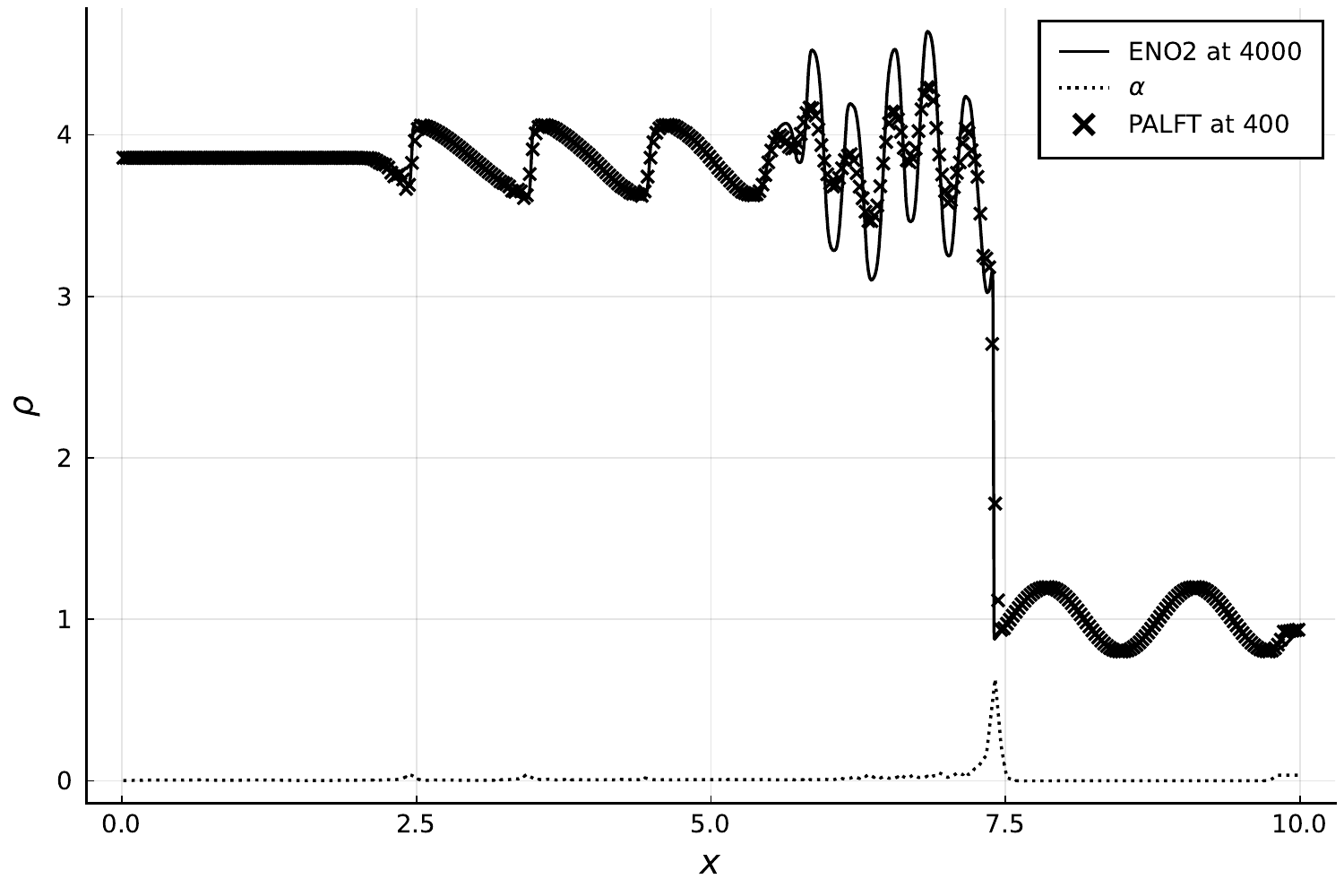}
		\end{subfigure}
		\begin{subfigure}{0.48\textwidth}
			\includegraphics[width=\textwidth]{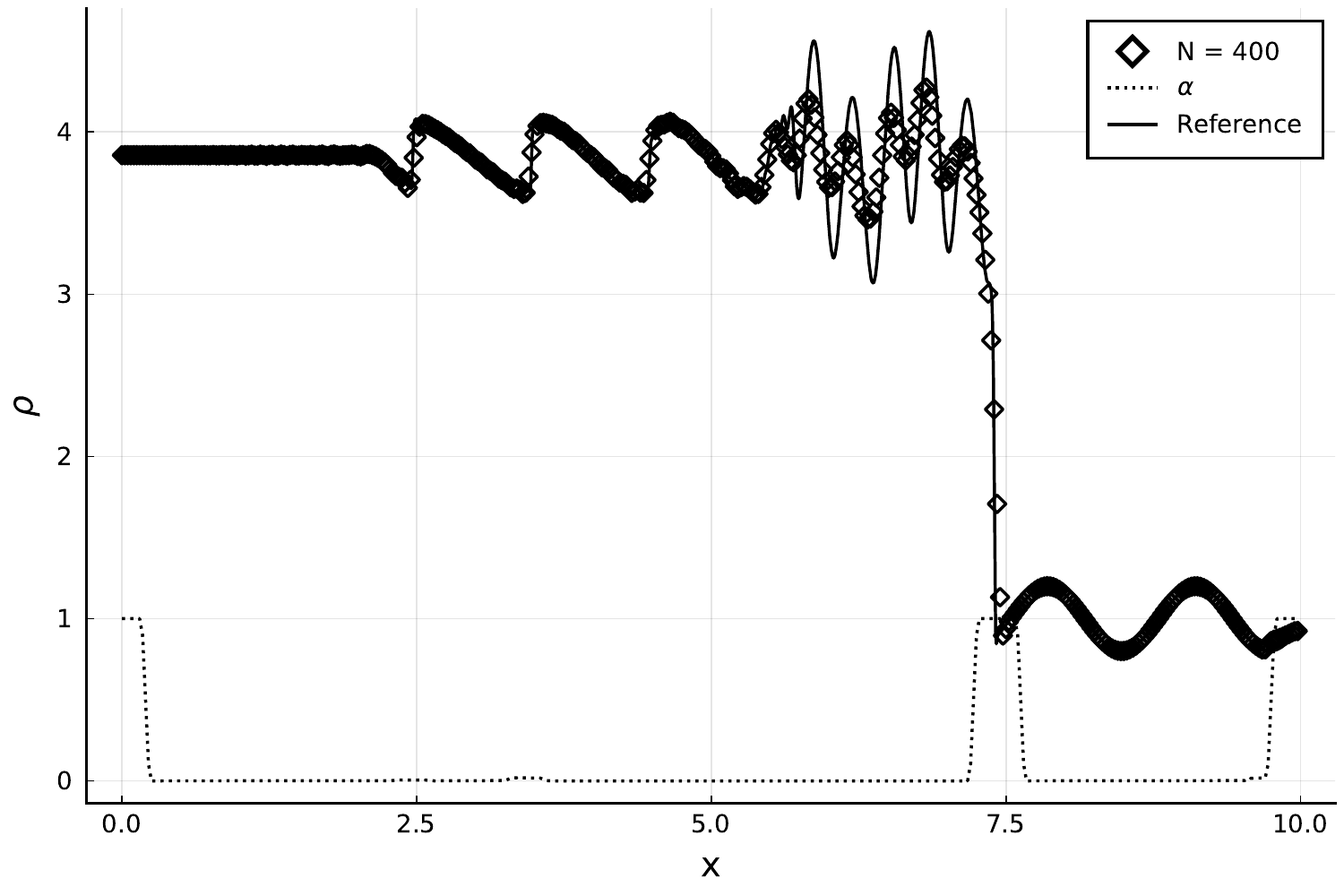}
			\subcaption{Dafermos Criterion scheme from \cite{klein2021using}}
		\end{subfigure}
		\caption{Density profile  $T = 1.8$}
		\label{fig:SO6}
	\end{figure}
The initial conditions of the Shu-Osther test are given by
	\begin{align*}
		\rho_0(x, 0) = \begin{cases}3.857153  \\ 1 + \epsilon \sin(5 x)  \end{cases} 
		\quad 
		v_0(x, 0) = \begin{cases} 2.629  \\ 0  \end{cases}
		p_0(x, 0) = \begin{cases} 10.333 & x < 1 \\ 1 & x \geq 1 \end{cases}
	\end{align*}
	in the domain $\Omega = [0, 10]$. The parameter $\epsilon = 0.2$ was set to the canonical  value of $0.2$ and the adiabatic exponent was set to $\gamma=\frac 7 5$ for an ideal gas. The density profiles  are printed in \cref{fig:SO6}. The numerical solutions are describing in nearly all cases the reference solutions   except for the positivity preserving scheme (PPLFT). Here, the calculated solution is obviously meaningless even if the positivity of the solutions is still ensured and the calculation could be carried on up to $T= 1.8$. The other three schemes  are able to resolve the strong shock without nonphysical oscillations. The amount of points needed for the transition is small and the wave structure trailing the shock is resolved accurate. The high gradient areas at $x=2.5, 3.5, 4.5$ results in small oscillations for the data-driven scheme   and PA based scheme. However, these oscillations are nearly not visible, especially for PALFT.  The dotted lines give also the $\alpha$ coefficients in the convex combination of our blending schemes. We  realize that for PALFT 
the $\alpha$s distinguish essentially from zero around the shock where for $DDLFT$ the lower-order method is also activated in smooth regions (i.e. $\alpha>0$).
	
	\begin{figure}
		\begin{subfigure}{0.32\textwidth}
			\includegraphics[width=\textwidth]{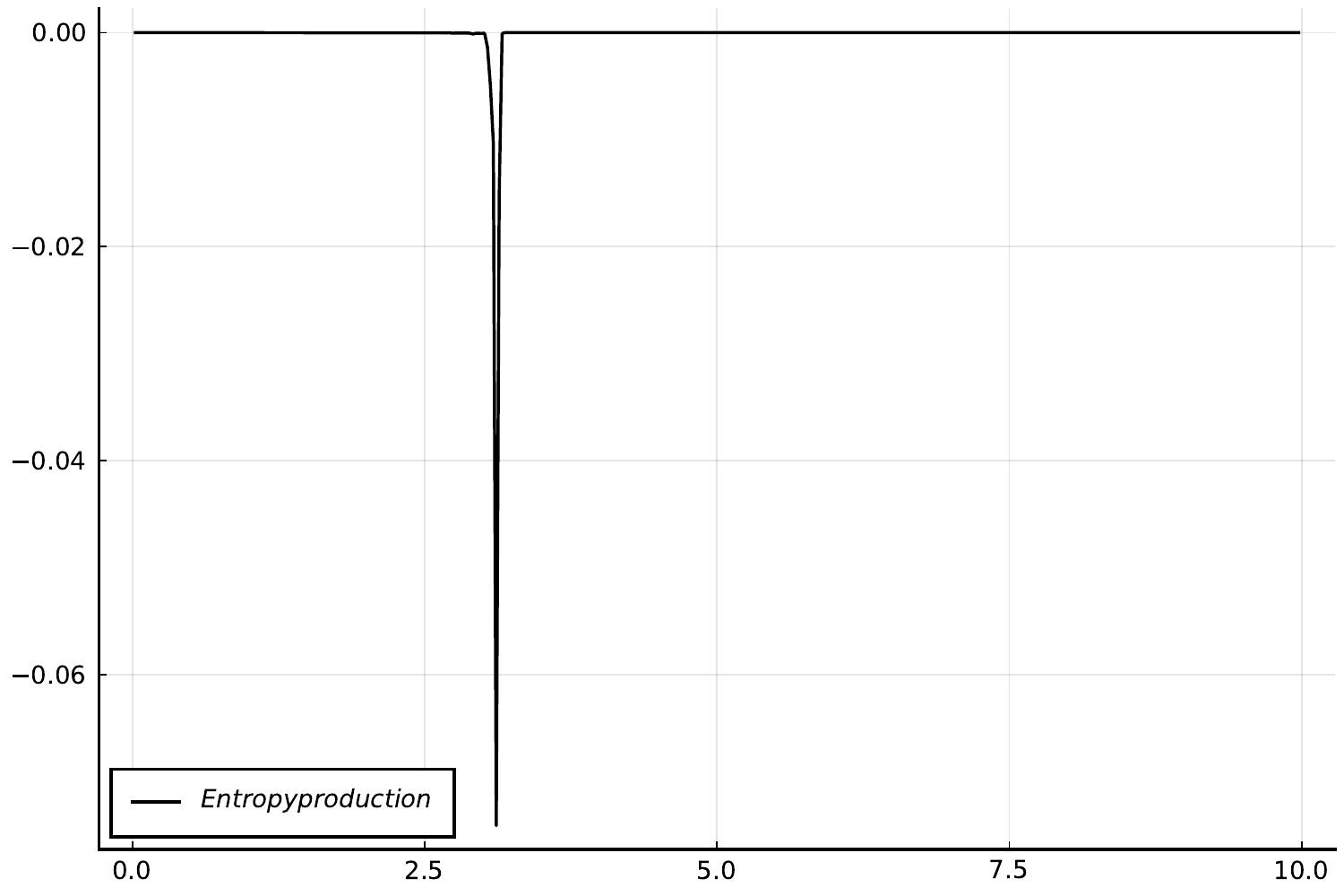}
			\caption{DDLFT}
		\end{subfigure}
		\begin{subfigure}{0.32\textwidth}
			\includegraphics[width=\textwidth]{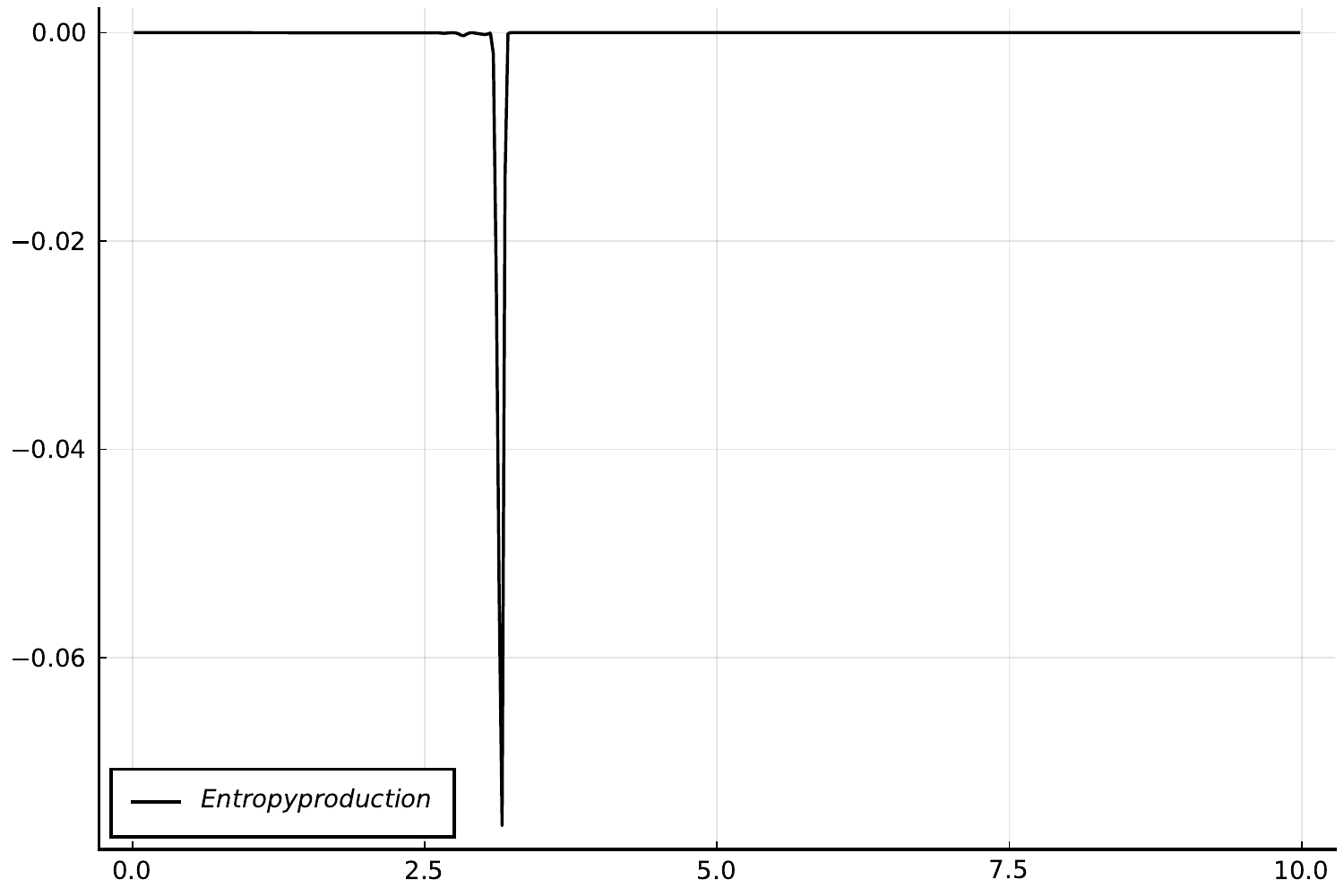}
			\caption{PALFT}
		\end{subfigure}
		\begin{subfigure}{0.32\textwidth}
			\includegraphics[width=\textwidth]{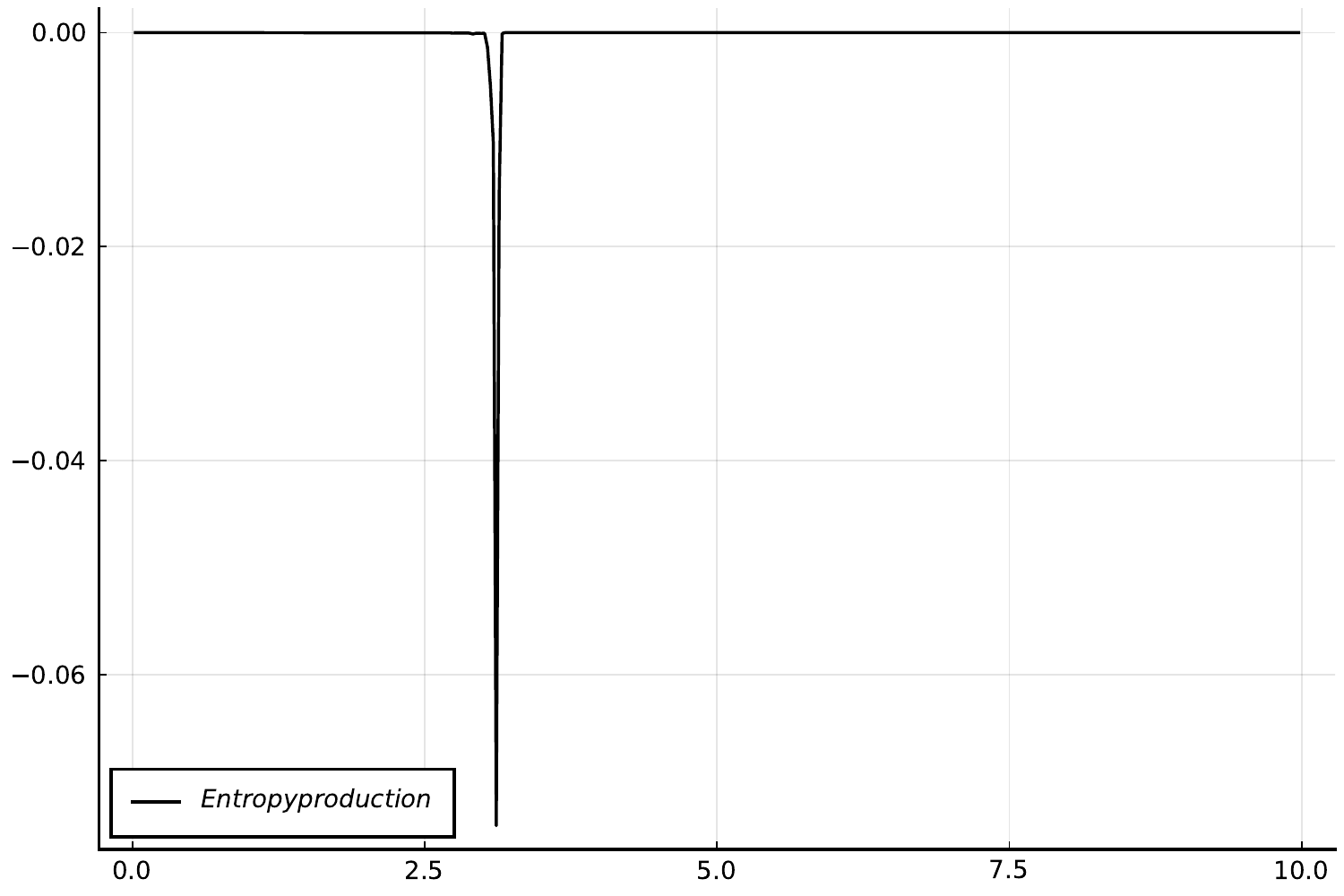}
			\caption{DELFT}
		\end{subfigure}
		\begin{subfigure}{0.32\textwidth}
			\includegraphics[width=\textwidth]{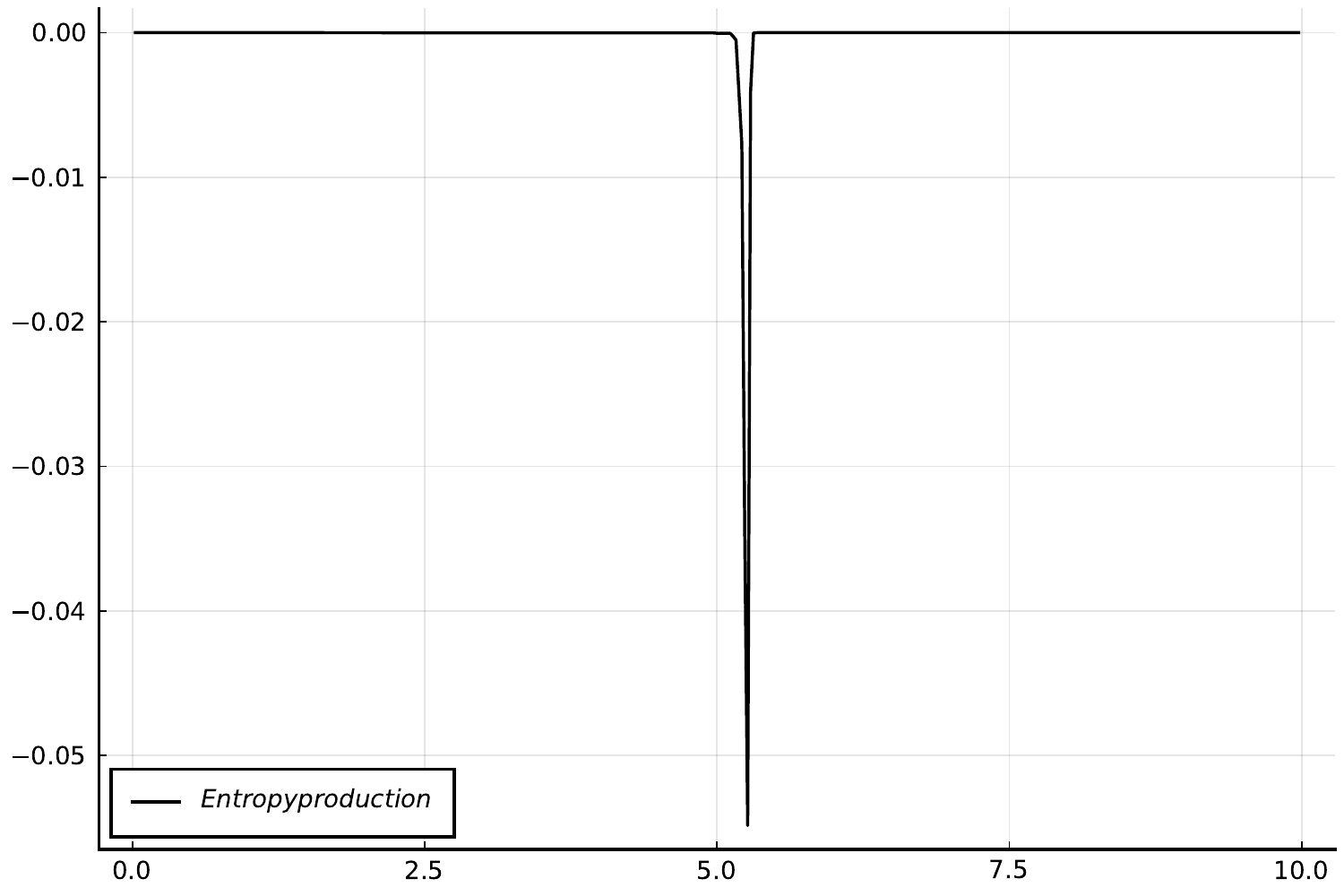}
			\caption{DDLFT}
		\end{subfigure}
		\begin{subfigure}{0.32\textwidth}
			\includegraphics[width=\textwidth]{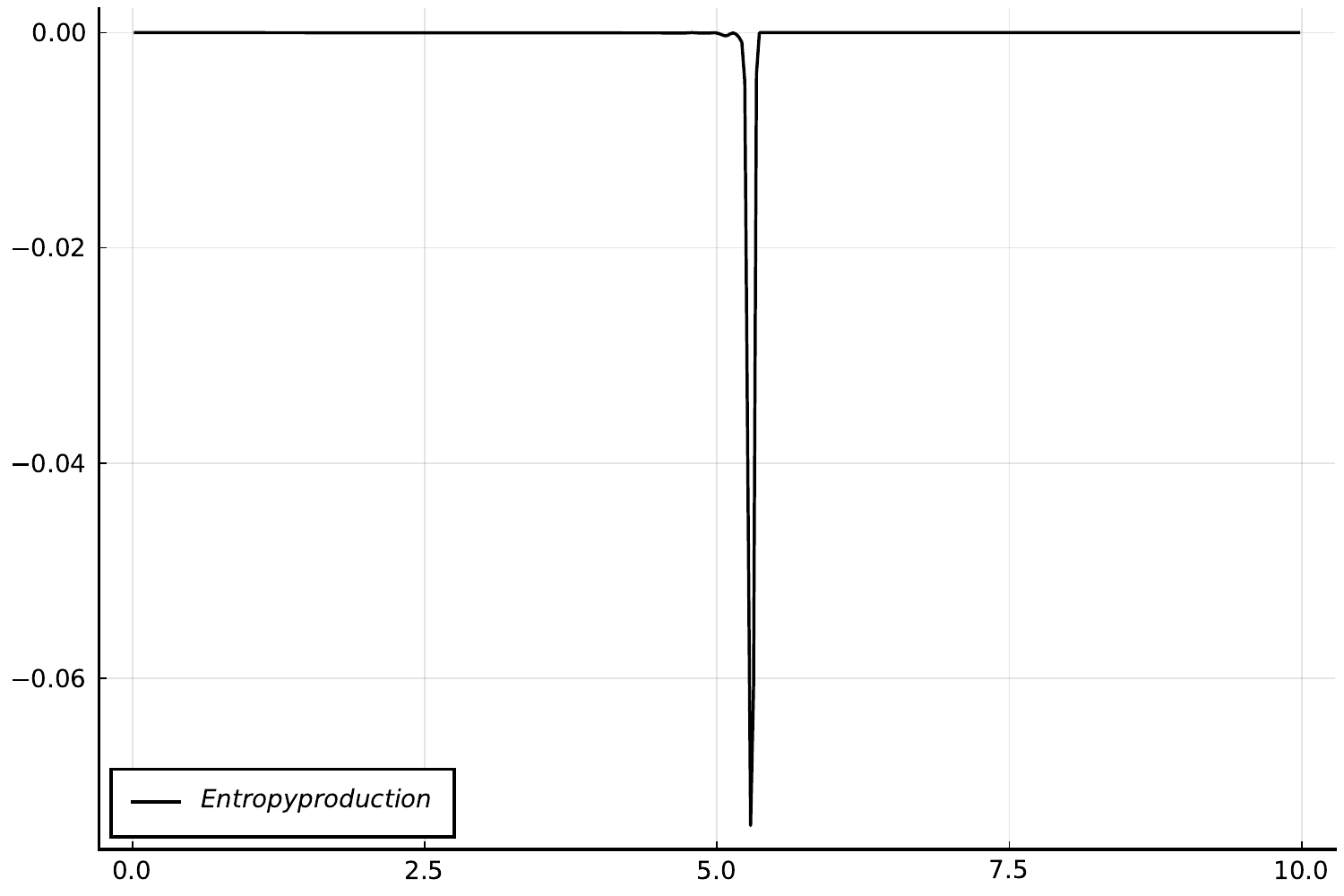}
			\caption{PALFT}
		\end{subfigure}
		\begin{subfigure}{0.32\textwidth}
			\includegraphics[width=\textwidth]{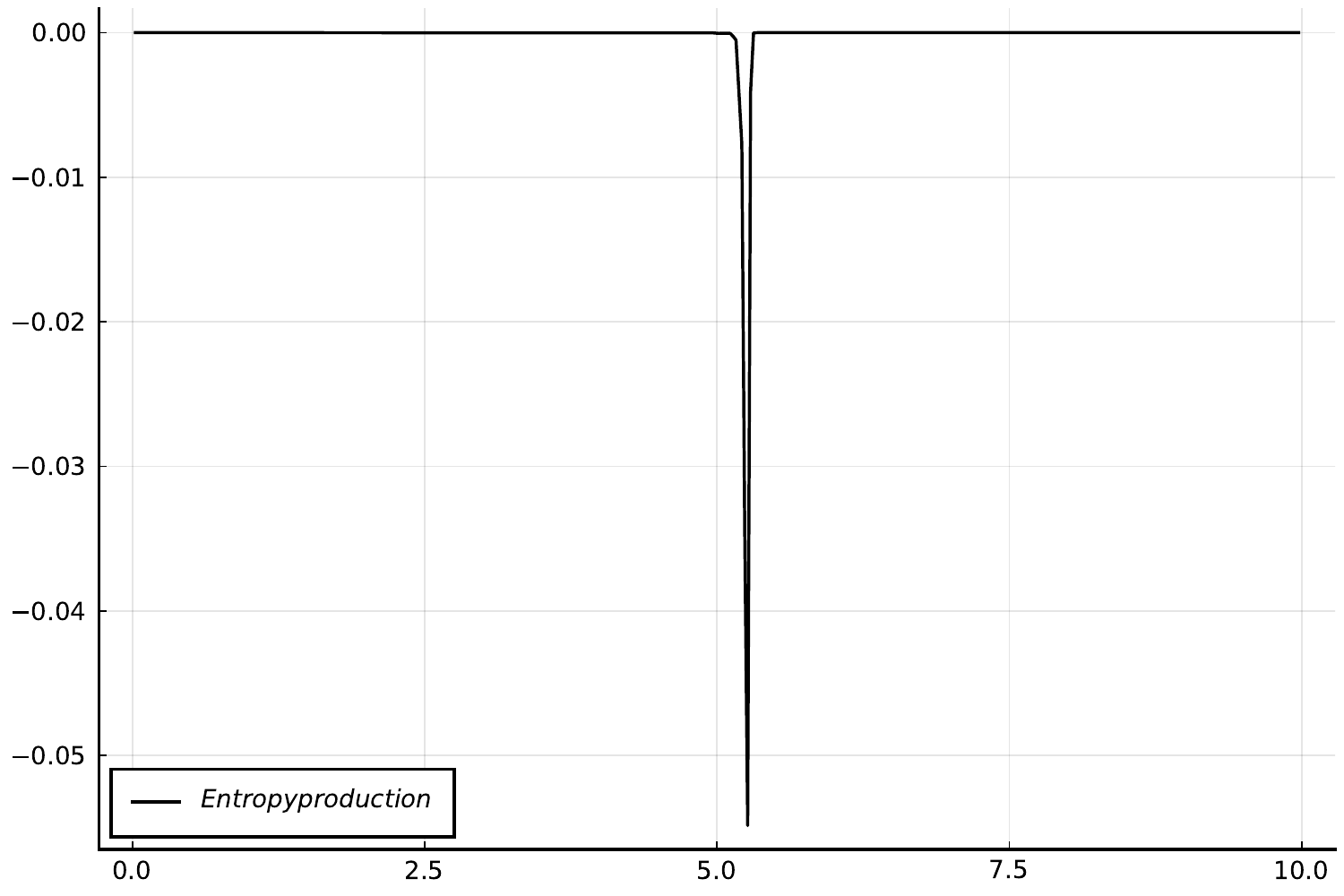}
			\caption{DELFT}
		\end{subfigure}
		\begin{subfigure}{0.32\textwidth}
			\includegraphics[width=\textwidth]{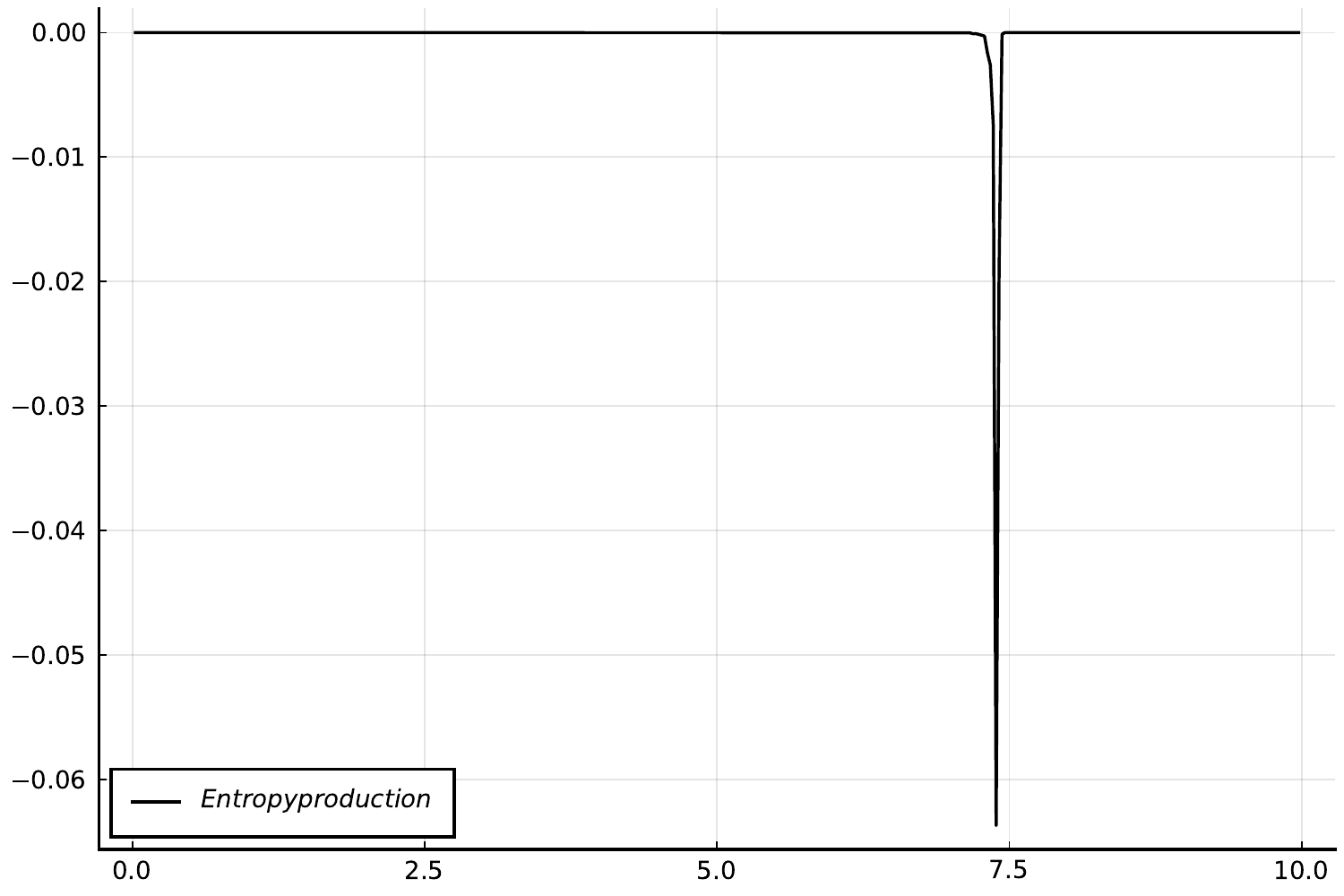}
			\caption{DDLFT}
		\end{subfigure}
		\begin{subfigure}{0.32\textwidth}
			\includegraphics[width=\textwidth]{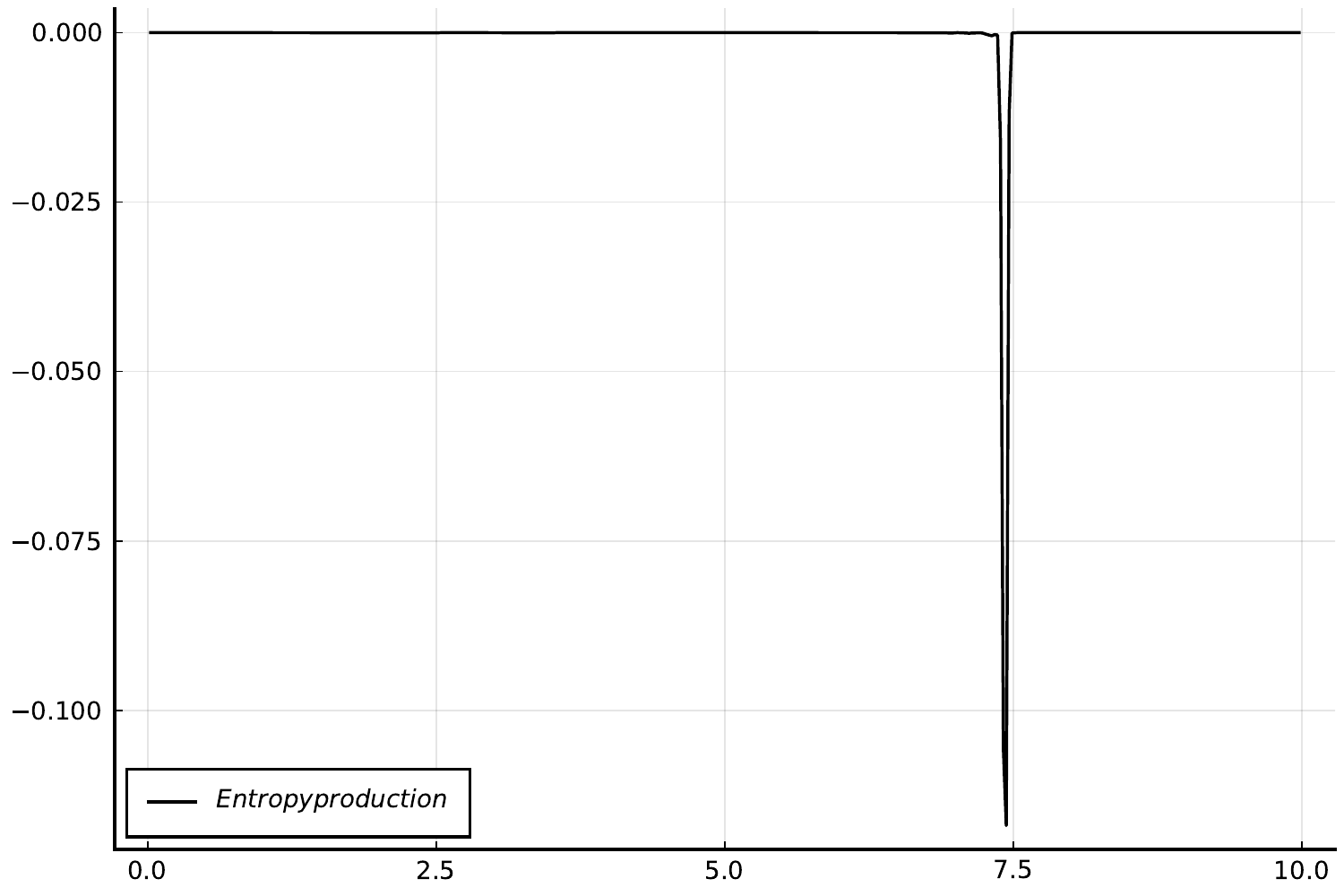}
			\caption{PALFT}
		\end{subfigure}
		\begin{subfigure}{0.32\textwidth}
			\includegraphics[width=\textwidth]{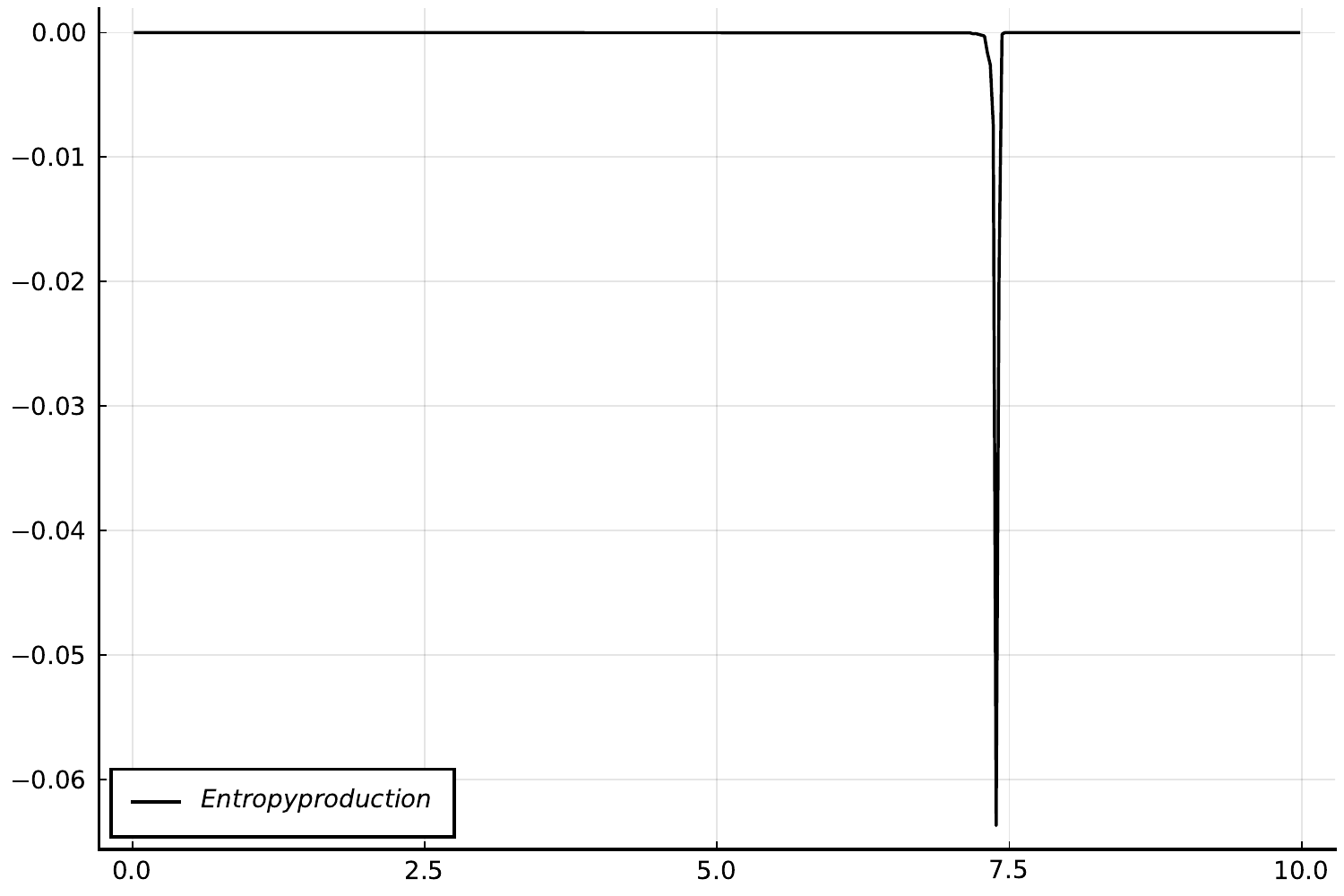}
			\caption{DELFT}
		\end{subfigure}
		
		\caption{Entropy production at $T = 0.6$ (first row), $T = 1.2$ (second row), $T = 1.8$ (third row)}
		\label{fig:SO7}
	\end{figure}

	 \cref{fig:SO7} shows the discrete entropy productions over the cells for the schemes 
	  DDLFT (left), PALFT(central) and DELFT(right) as snapshots at $T \in \{0.6, 1.2, 1.8\}$. 
	As printed, the schemes fulfill also locally the entropy inequality and are entropy dissipative (at least for this experiment). 
The small oscillations inside the numerical solution may be further cancelled out using additionally the Dafermos criterion \cref{eq_Dafermos} as mentioned also in \cite{klein2021using}.
Finally, we stress out that we have in all of our  simulations  no violations of positivity of density and pressure recognized. 
	\begin{figure}
	\begin{center}
		\includegraphics[width=0.7\textwidth]{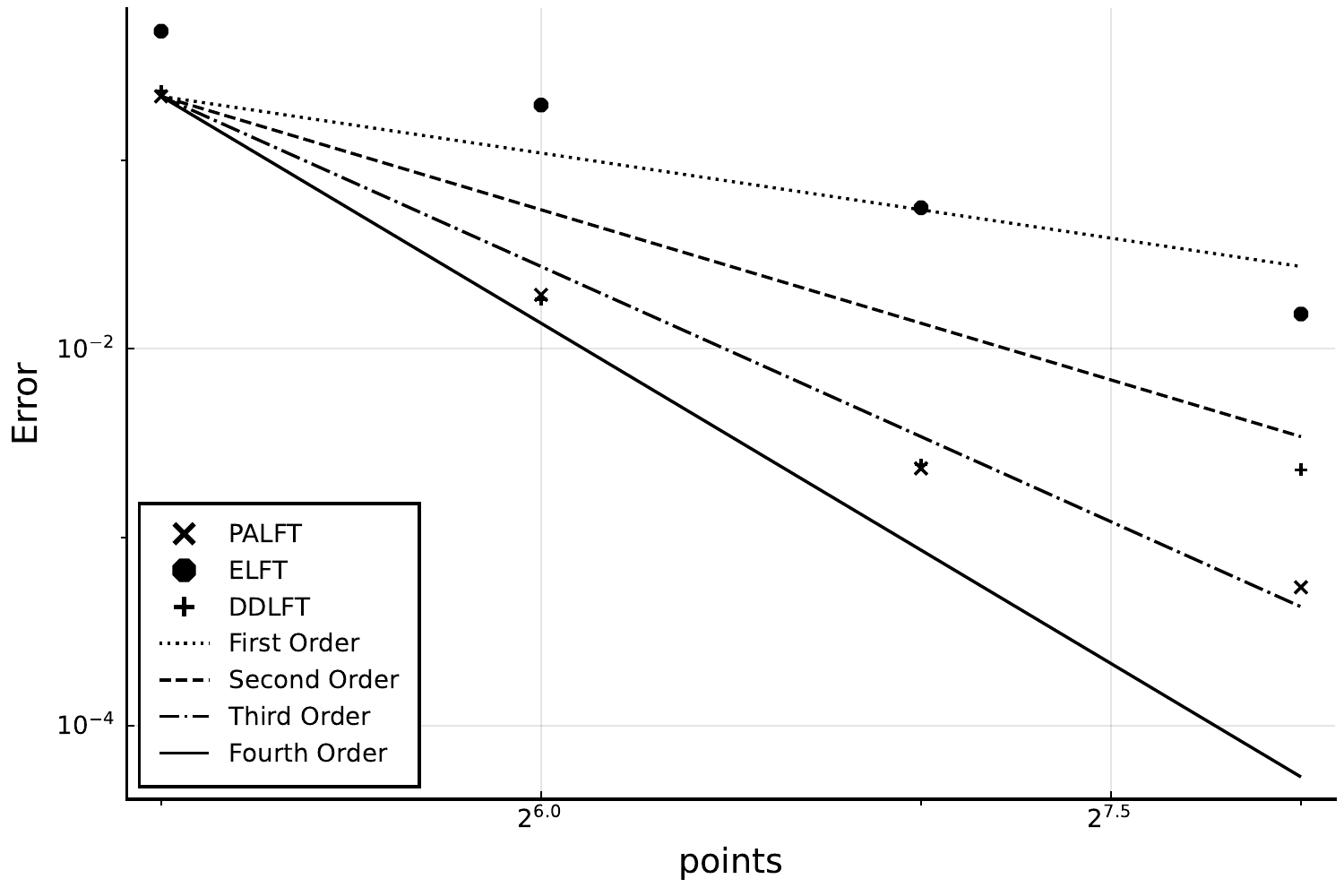}
		\caption{Convergence analysis of the schemes.}
		\label{fig:CA}
		\end{center}
	\end{figure}
	
	To determine the experimental order of convergence, we simulate the smooth transport of a density variation under pressure equilibrium already used in \cite{klein2021using}:
	\[
	\rho_0(x, 0) = 3.857153 + \epsilon \sin(2 x),  \quad v_0(x, 0) =  2.0, \quad p_0(x, 0) = 10.33333.
	\]
	The $\mathrm{L}^1$-errors of the schemes are shown in  \cref{fig:CA}. The reference was calculated by the ENO2 method  on a grid with $2^{14}$ cells. Please note that we calculated for a given grid $T_N$ with $2^N$ cells  the errors between the mean values of our approximated solution and the mean values of the reference solution using $2^{14}$ cells. 
	The values $\tilde u_k$ of a solution with $2^N$ cells is scaled  downwards to a grid with $2^{N-1}$ cells by the following procedure. We use the solution $\tilde u_k$ with $2^N$ cells and apply $ u_k = \frac{\tilde u_{2k} + \tilde u_{2k-1}}{2}$ to find the solution $u_k$ on the grid with $2^{N-1}$ cells. This procedure can be applied several times to find consistent mean values for any grid having a power of two cells. 
	The PA and data-driven schemes converge with third-order while the provable entropy stable scheme only converges with second-order convergence rate. Higher orders of convergence by using higher-order time integration and entropy conservative fluxes could not be demonstrated for schemes based on \textbf{Condition $F$} since  \textbf{Condition $F$} gives to big values for $\alpha$ to reach third-order convergence. The reason for this is that condition $F$ is only a sufficient condition for a satisfied fully discrete entropy inequality. A fully discrete entropy inequality can be also satisfied by smaller values of $\alpha$ as could be seen in \cref{fig:SO7}. A deeper analysis of sharper lower bounds on $\alpha$ will be part of future publications considering also subcell techniques. We further see a slide decrease of order for the DDLFT for fine grids. This is due to the fact that the training  data and the neural nets can not keep up with the DDLFT scheme itself on fine grids. 
	Finally,  we mark that third-order accuracy is only reached due to the time-integration method. 

\section{Summary}\label{se_summary}

In this work, we compared three different ways to control oscillations in a high-order finite volume scheme. After giving an introduction and an overview over the underlying numerical flux based on a convex combination, some physical constraints were concerned. To be more specific, we gave conditions that will assure the fully discrete entropy inequality or  pressure and density of the numerical solution of the Euler equations to be positive. A second possibility was further constructed using a feedforward neural network. Here, the network was trained by data which were calculated by a reference scheme. We provided afterwards a choice of the convex parameter based on polynomial annihilation operators after giving a brief introduction to their basic framework. In a last step, the resulting schemes were tested and compared by numerical experiments on the Euler equations. 
We have recognized that  our FV schemes except the PPLFT  were able to handle 
strong shocks and are mainly oscillation free. In respect to oscillations, we recognized the best performance for the DELFT scheme which is not suprising since it ensures the fully discrete entropy inequality. The drawback of this scheme was that only second order could be reached in our tests due to the selection of $\alpha$. However, subcell techniques can be used to solve this issue and will be investigated in the future. 
As we also recognized in our simulations, the selection of $\alpha$ using \textbf{Condition F} is sufficient but not necessary. By analyzing the remaining schemes, we have not recognized any violation of the entropy inequality even not for the data driven scheme.  We like to point out again that all of our considered approaches show promising results and can be used. \\
In the future, we plan to continue our investigation and consider two-dimensional problems using unstructured grids. Here, additional techniques are needed and we will also consider more advanced benchmark problems.  Extensions to multiphase flows are as well planned. 
Finally, our high-order FV blending schemes can be also the starting point of a convergence analysis for the Euler equations via dissipative measure-valued solutions  \cite{feireisl2019convergence, lukavcova2022convergence} which is already work in progress.


\bibliographystyle{siamplain}
\bibliography{literature}

\begin{thebibliography}{10}

\bibitem{abgrall2018general}
{\sc R.~Abgrall}, {\em A general framework to construct schemes satisfying
  additional conservation relations. application to entropy conservative and
  entropy dissipative schemes}, Journal of Computational Physics, 372 (2018),
  pp.~640--666.

\bibitem{abgrall2021relaxation}
{\sc R.~Abgrall, E.~L. M{\'e}l{\'e}do, P.~{\"O}ffner, and D.~Torlo}, {\em
  Relaxation deferred correction methods and their applications to residual
  distribution schemes}, arXiv preprint arXiv:2106.05005,  (2021).

\bibitem{abgrall2021analysis}
{\sc R.~Abgrall, J.~Nordstr{\"o}m, P.~{\"O}ffner, and S.~Tokareva}, {\em
  Analysis of the {SBP-SAT} stabilization for finite element methods part ii:
  Entropy stability}, Communications on Applied Mathematics and Computation,
  (2021), pp.~1--23.

\bibitem{abgrall2022reinterpretation}
{\sc R.~Abgrall, P.~{\"O}ffner, and H.~Ranocha}, {\em Reinterpretation and
  extension of entropy correction terms for residual distribution and
  discontinuous galerkin schemes: Application to structure preserving
  discretization}, Journal of Computational Physics,  (2022), p.~110955.

\bibitem{abgrall2017handbook}
{\sc R.~Abgrall and C.-W. Shu}, {\em Handbook of numerical methods for
  hyperbolic problems: applied and modern issues}, vol.~18, Elsevier, 2017.

\bibitem{abgrall2020neural}
{\sc R.~Abgrall and M.~H. Veiga}, {\em Neural network-based limiter with
  transfer learning}, Communications on Applied Mathematics and Computation,
  (2020), pp.~1--41.

\bibitem{archibald2005polynomial}
{\sc R.~Archibald, A.~Gelb, and J.~Yoon}, {\em Polynomial fitting for edge
  detection in irregularly sampled signals and images}, SIAM journal on
  numerical analysis, 43 (2005), pp.~259--279.

\bibitem{bacigaluppi2019posteriori}
{\sc P.~Bacigaluppi, R.~Abgrall, and S.~Tokareva}, {\em " a posteriori" limited
  high order and robust residual distribution schemes for transient simulations
  of fluid flows in gas dynamics}, arXiv preprint arXiv:1902.07773,  (2019).

\bibitem{beck2020neural}
{\sc A.~D. Beck, J.~Zeifang, A.~Schwarz, and D.~G. Flad}, {\em A neural network
  based shock detection and localization approach for discontinuous {G}alerkin
  methods}, Journal of Computational Physics, 423 (2020), p.~109824.

\bibitem{chan2018discretely}
{\sc J.~Chan}, {\em On discretely entropy conservative and entropy stable
  discontinuous {G}alerkin methods}, Journal of Computational Physics, 362
  (2018), pp.~346--374.

\bibitem{chen2020review}
{\sc T.~Chen and C.-W. Shu}, {\em Review of entropy stable discontinuous
  {G}alerkin methods for systems of conservation laws on unstructured simplex
  meshes}, CSIAM Transactions on Applied Mathematics, 1 (2020), pp.~1--52.

\bibitem{clain2011high}
{\sc S.~Clain, S.~Diot, and R.~Loub{\`e}re}, {\em A high-order finite volume
  method for systems of conservation laws—multi-dimensional optimal order
  detection (mood)}, Journal of computational Physics, 230 (2011),
  pp.~4028--4050.

\bibitem{Cybenko1989}
{\sc G.~Cybenko}, {\em Approximation by superpositions of a sigmoidal
  function}, Mathematics of Control, Signals and Systems, 2 (1989),
  pp.~303--314.

\bibitem{dafermos1973entropy}
{\sc C.~M. Dafermos}, {\em The entropy rate admissibility criterion for
  solutions of hyperbolic conservation laws}, Journal of Differential
  Equations, 14 (1973), pp.~202--212.

\bibitem{discacciati2020controlling}
{\sc N.~Discacciati, J.~S. Hesthaven, and D.~Ray}, {\em Controlling
  oscillations in high-order discontinuous {G}alerkin schemes using artificial
  viscosity tuned by neural networks}, Journal of Computational Physics, 409
  (2020), p.~109304.

\bibitem{du2016handbook}
{\sc Q.~Du, R.~Glowinski, M.~Hinterm{\"u}ller, and E.~Suli}, {\em Handbook of
  numerical methods for hyperbolic problems: basic and fundamental issues},
  Elsevier, 2016.

\bibitem{feireisl2019convergence}
{\sc E.~Feireisl, M.~Luk\'{a}\v{c}ov\'{a}-Medvid'ov\'{a}, and H.~Mizerov\'{a}},
  {\em Convergence of finite volume schemes for the {E}uler equations via
  dissipative measure-valued solutions}, Found. Comput. Math., 20 (2020),
  pp.~923--966, \url{https://doi.org/10.1007/s10208-019-09433-z},
  \url{https://doi.org/10.1007/s10208-019-09433-z}.

\bibitem{fisher2013discretely}
{\sc T.~C. Fisher, M.~H. Carpenter, J.~Nordstr{\"o}m, N.~K. Yamaleev, and
  C.~Swanson}, {\em Discretely conservative finite-difference formulations for
  nonlinear conservation laws in split form: {T}heory and boundary conditions},
  Journal of Computational Physics, 234 (2013), pp.~353--375.

\bibitem{glaubitz2019high}
{\sc J.~Glaubitz and A.~Gelb}, {\em High order edge sensors with {$l^1$}
  regularization for enhanced discontinuous {G}alerkin methods}, SIAM Journal
  on Scientific Computing, 41 (2019), pp.~A1304--A1330.

\bibitem{guermond2019invariant}
{\sc J.-L. Guermond, B.~Popov, and I.~Tomas}, {\em Invariant domain preserving
  discretization-independent schemes and convex limiting for hyperbolic
  systems}, Computer Methods in Applied Mechanics and Engineering, 347 (2019),
  pp.~143--175.

\bibitem{Harten83b}
{\sc A.~Harten}, {\em On the symmetric form of systems of conservation laws
  with entropy}, Journal of Computational Physics, 49 (1983), pp.~151--164.

\bibitem{Harten1989ENO}
{\sc A.~{Harten}}, {\em {ENO schemes with subcell resolution}}, {J. Comput.
  Phys.}, 83 (1989), pp.~148--184,
  \url{https://doi.org/10.1016/0021-9991(89)90226-X}.

\bibitem{ENOIII}
{\sc A.~Harten, B.~Enquist, S.~Osher, and S.~R. Chakravarthy}, {\em Uniformly
  high order accurate essentially non-oscillatory schemes {III}}, Journal of
  Computational Physics, 71 (1987), pp.~231--303.

\bibitem{HLL1983}
{\sc A.~Harten, P.~D. Lax, and B.~van Leer}, {\em On upstream differencing and
  {G}odunov type schemes for hyperbolic conservation laws}, 25 (1983),
  pp.~35--61.

\bibitem{hennemann2021provably}
{\sc S.~Hennemann, A.~M. Rueda-Ram{\'\i}rez, F.~J. Hindenlang, and G.~J.
  Gassner}, {\em A provably entropy stable subcell shock capturing approach for
  high order split form dg for the compressible euler equations}, Journal of
  Computational Physics, 426 (2021), p.~109935.

\bibitem{Isaacson1966Analyis}
{\sc E.~Isaacson and H.~B. Keller}, {\em Analysis of Numerical Methods}, Wiley,
  1966.

\bibitem{IsmailRoe2009}
{\sc F.~Ismail and P.~L. Roe}, {\em Affordable, entropy-consistent flux
  functions {II}: Entropy production at shocks}, Journal of Computational
  Physics, 228 (2009), pp.~5410--5436.

\bibitem{kingma2017adam}
{\sc D.~P. Kingma and J.~Ba}, {\em Adam: A method for stochastic optimization},
  2017, \url{https://arxiv.org/abs/1412.6980}.

\bibitem{klein2021using}
{\sc S.~Klein}, {\em Using the {D}afermos entropy rate criterion in numerical
  schemes}, arXiv preprint arXiv:arXiv:2202.13999,  (2022).

\bibitem{kuzmin2020monolithic}
{\sc D.~Kuzmin}, {\em Monolithic convex limiting for continuous finite element
  discretizations of hyperbolic conservation laws}, Computer Methods in Applied
  Mechanics and Engineering, 361 (2020), p.~112804.

\bibitem{kuzmin2021Limiter}
{\sc D.~{Kuzmin}, H.~{Hajduk}, and A.~{Rupp}}, {\em Limiter-based entropy
  stabilization of semi-discrete and fully discrete schemes for nonlinear
  hyperbolic problems}, {Comput. Methods Appl. Mech. Eng.}, 389 (2022), p.~28,
  \url{https://doi.org/10.1016/j.cma.2021.114428}.
\newblock Id/No 114428.

\bibitem{Lax71}
{\sc P.~D. Lax}, {\em Shock waves and entropy}, Contributions to Nonlinear
  Functional Analysis,  (1971), pp.~603--634.

\bibitem{lefloch2002fully}
{\sc P.~G. Lefloch, J.-M. Mercier, and C.~Rohde}, {\em Fully discrete, entropy
  conservative schemes of arbitrary order}, SIAM Journal on Numerical Analysis,
  40 (2002), pp.~1968--1992.

\bibitem{lukavcova2022convergence}
{\sc M.~Luk{\'a}{\v{c}}ov{\'a}-Medvi{\v{d}}ov{\'a} and P.~{\"O}ffner}, {\em
  Convergence of discontinuous {G}alerkin schemes for the {E}uler equations via
  dissipative weak solutions}, arXiv preprint arXiv:2202.10043,  (2022).

\bibitem{offner2015zweidimensionale}
{\sc P.~{\"O}ffner}, {\em Zweidimensionale klassische und diskrete orthogonale
  Polynome und ihre Anwendung auf spektrale Methoden zur L{\"o}sung von
  hyperbolischen Erhaltungsgleichungen}, PhD thesis, 2015.

\bibitem{offner2018stability}
{\sc P.~{\"O}ffner, J.~Glaubitz, and H.~Ranocha}, {\em Stability of correction
  procedure via reconstruction with summation-by-parts operators for {B}urgers'
  equation using a polynomial chaos approach}, ESAIM: Mathematical Modelling
  and Numerical Analysis, 52 (2018), pp.~2215--2245.

\bibitem{persson2006sub}
{\sc P.-O. Persson and J.~Peraire}, {\em Sub-cell shock capturing for
  discontinuous {G}alerkin methods}, in 44th AIAA Aerospace Sciences Meeting
  and Exhibit, 2006, p.~112.

\bibitem{ranocha2018comparison}
{\sc H.~Ranocha}, {\em Comparison of some entropy conservative numerical fluxes
  for the {E}uler equations}, Journal of Scientific Computing, 76 (2018),
  pp.~216--242.

\bibitem{ranocha2016summation}
{\sc H.~Ranocha, P.~{\"O}ffner, and T.~Sonar}, {\em Summation-by-parts
  operators for correction procedure via reconstruction}, Journal of
  Computational Physics, 311 (2016), pp.~299--328.

\bibitem{richtmyer1994difference}
{\sc R.~D. Richtmyer and K.~W. Morton}, {\em Difference methods for
  initial-value problems}, Malabar,  (1994).

\bibitem{Roe1981}
{\sc P.~L. Roe}, {\em Approximate riemann solvers, parameter vectors and
  difference schemes}, Journal of Computational Physics, 43 (1981),
  pp.~357--372.

\bibitem{rueda2022subcell}
{\sc A.~M. Rueda-Ram{\'\i}rez, W.~Pazner, and G.~J. Gassner}, {\em Subcell
  limiting strategies for discontinuous galerkin spectral element methods},
  arXiv preprint arXiv:2202.00576,  (2022).

\bibitem{Rumelhart1986Learning}
{\sc D.~E. Rumelhart, G.~E. Hinton, and W.~R. J.}, {\em Learning
  representations by back-propagating errors}, Nature,  (1986).

\bibitem{shi2018local}
{\sc C.~Shi and C.-W. Shu}, {\em On local conservation of numerical methods for
  conservation laws}, Computers \& Fluids, 169 (2018), pp.~3--9.

\bibitem{SO1988}
{\sc C.-W. Shu and S.~Osher}, {\em Efficient implementation of essentially
  non-oscillatory shock-capturing schemes}, Journal of Computational Physics,
  77 (1988), pp.~439--471.

\bibitem{SO1989}
{\sc C.-W. Shu and S.~Osher}, {\em Efficient implementation of essentially
  non-oscillatory shock-capturing schemesii}, Journal of Computational Physics,
  83 (1989), pp.~439--471.

\bibitem{sonntag2014shock}
{\sc M.~Sonntag and C.-D. Munz}, {\em Shock capturing for discontinuous
  {G}alerkin methods using finite volume subcells}, in Finite Volumes for
  Complex Applications VII-Elliptic, Parabolic and Hyperbolic Problems,
  Springer, 2014, pp.~945--953.

\bibitem{srivastava2014dropout}
{\sc N.~Srivastava, G.~Hinton, A.~Krizhevsky, I.~Sutskever, and
  R.~Salakhutdinov}, {\em Dropout: a simple way to prevent neural networks from
  overfitting}, The journal of machine learning research, 15 (2014),
  pp.~1929--1958.

\bibitem{Tadmor84II}
{\sc E.~Tadmor}, {\em Numerical viscsity and the entropy condition for
  conservative difference schemes}, Mathematics of Computation, 43 (1984),
  pp.~369--381.

\bibitem{tadmor1987numerical}
{\sc E.~Tadmor}, {\em The numerical viscosity of entropy stable schemes for
  systems of conservation laws. {I}}, Mathematics of Computation, 49 (1987),
  pp.~91--103.

\bibitem{zeifang2021data}
{\sc J.~Zeifang and A.~Beck}, {\em A data-driven high order sub-cell artificial
  viscosity for the discontinuous {G}alerkin spectral element method}, Journal
  of Computational Physics,  (2021), p.~110475.

\bibitem{zhang2011maximum}
{\sc X.~Zhang and C.-W. Shu}, {\em Maximum-principle-satisfying and
  positivity-preserving high-order schemes for conservation laws: survey and
  new developments}, Proceedings of the Royal Society A: Mathematical, Physical
  and Engineering Sciences, 467 (2011), pp.~2752--2776.

\end{thebibliography}

\end{document}